\documentclass[12pt]{article}

\usepackage{amsmath,amssymb,amsfonts,amsthm}
\usepackage{graphics}
\usepackage{epsfig}
 \usepackage{amsmath}
\usepackage{amsfonts}
\usepackage{color}
\allowdisplaybreaks
\textwidth6.5in \textheight8.5in \hoffset -.5in \voffset-0.5in

\font\elevensf=cmss10 scaled\magstephalf

%\@addtoreset{equation}{section}
 \newtheorem{theorem} {{\elevensf THEOREM}}[section]
 \newtheorem{proposition} {{\elevensf PROPOSITION}}[section]

\newtheorem{corollary} {{\elevensf COROLLARY}}[section]

\newtheorem{remark} {{\elevensf REMARK}}[section]

\renewcommand\qed{$\blacksquare$}

\def\CC{{\rm \kern.24em \vrule width.02em height1.4ex depth-.05ex \kern-.26emC}}
\def\dive{{\rm div} \,}

\def\TagOnRight

\def\AA{{it I} \hskip-3pt{\tt A}}

\def\QQ{\rlap {\raise 0.4ex \hbox{$\scriptscriptstyle |$}} {\hskip -0.1em Q}}

\newcommand{\lb}{\left(}
\newcommand{\rb}{\right)}
\catcode`\@=11

\def\theequation{\@arabic{\c@section}.\@arabic{\c@equation}}

%\catcode`\@=12
\begin{document}
\baselineskip 14pt
%\partialrskip 10pt
\parindent.4in
\catcode`\@=11 
%\partialgestyle{empty}

\begin{center}

{\Huge \bf Homogenization of Stokes System using  Bloch Waves } \\[5mm]

{\bf Gr\'{e}goire ALLAIRE $^{a}$, Tuhin GHOSH $^{b}$ and Muthusamy VANNINATHAN $^{b}$ } \end{center} 
\noindent
\textit{a: Centre de Mathématiques Appliqu\'{e}es, Ecole Polytechnique, CNRS, Universit\'e Paris-Saclay, 91128 Palaiseau, France}.\\ 
\textit{b: Centre for Applicable Matematics, Tata Institute of Fundamental Research, India}.\\
\textit{email : gregoire.allaire@polytechnique.fr\ ,\ tuhin@math.tifrbng.res.in\ ,\ vanni@math.tifrbng.res.in }
\begin{abstract}
\noindent
In this work, we study the Bloch wave homogenization for the Stokes system with periodic viscosity coefficient. In particular, we obtain the spectral interpretation of the homogenized tensor. The presence of the incompressibility constraint in the model raises new issues linking the homogenized tensor and the Bloch spectral data. 
The main difficulty is a lack of smoothness for the bottom of the Bloch spectrum, a phenomenon which is not present in the case of the elasticity system. This issue is solved in the present work, 
completing the homogenization process of the Stokes system via the Bloch wave method. 
\end{abstract}

\vskip .5cm\noindent
{\bf Keywords :} Spectral Theory, Bloch waves, Stokes Equation, Homogenization, Periodic media. 
\vskip .5cm
\noindent
{\bf Mathematics Subject Classification :} 35P99, 35Q30, 47A75, 49J20, 93C20, 93B60.

\section{ Introduction and Main Result}\label{sec1} 
We consider the Stokes system in which the viscosity is a  periodically  
varying function of the space variable with small period $\epsilon > 0$. 
Many physical phenomena (boiling flows, porous media, oil reservoirs, etc.) lead to mixture of fluids with different viscosities. 
For incompressible slow or creeping flows, such a situation is modeled by the system \eqref{eq1} for a  Stokesian fluid with variable viscosity which is further assumed to be a periodic function. 
From the point of view of application, it is difficult to realize such a periodic distribution of droplets of one fluid in another without deforming the periodic structure, and \eqref{eq1} may seem as too much of an idealized system. 
Therefore, we also treat another model, which is a variant of the Stokes system and is physically more relevant. 
Namely, we consider the so-called incompressible elasticity system \eqref{sop} which corresponds to a mixture 
of incompressible elastic phases in a composite material (this situation is quite common for rubber or elastomers). 

We introduce now our first model. 
Assuming that the viscosity is a periodic function, the goal is to capture the effective viscosity of the mixture.
To write down the model we start with a $1$-periodic function $\mu= \mu (y) \in   L^\infty  (\mathbb{T}^d)$  or equivalently, a $Y$-periodic function where $Y= ]0, 1[^d. $  Here  $ \mathbb{T}^d $ is the  unit torus in $\mathbb{R}^d.$ We assume $\mu (y) \geq  \mu_0 >0$ a.e in 
$\mathbb{T}^d. $  Denote by  $\mu^\epsilon= \mu^\epsilon(x) = \mu   \left(\frac{x}{\epsilon} \right)$ the  corresponding scaled function which is  $\epsilon$-periodic.  With  $f =  f (x) \in L^2(\Omega)^d$ representing  external force,  we consider the Stokes system in a  bounded 
smooth connected domain $\Omega   \subset  \mathbb{R}^d$, with no-slip boundary condition :   
 
\begin{equation} \left. \begin{array}{rllllll}
-\nabla  \cdot      (\mu^\epsilon \nabla u^\epsilon) + \nabla p^\epsilon &=& f \mbox{  in } \Omega, \\[2mm]
 \nabla \cdot  u^\epsilon &=&  0 \mbox{   in } \Omega,  \\[2mm]
 u^\epsilon   &=& 0 \mbox{ on } \partial\Omega. \\[2mm]
 \end{array} \right\} \label{eq1}  
\end{equation}
As usual,    $u^\epsilon$ and $p^\epsilon$ represent respectively the  velocity and  pressure fields of the  fluid. 
Well-posedness theory of   (\ref{eq1})  is  classical  \cite{GR}.
We recall some of its elements. To write down the weak formulation, we introduce the spaces 

\begin{equation} 
V = \left\{ v \in  H_0^1 (\Omega)^d; \quad \nabla \cdot v =0 \mbox{ in  }\Omega \right\}, \label{eq3}
\end{equation}

\begin{equation}
 H = \left\{ v \in  L^2 (\Omega)^d; \quad v\cdot \nu =0 \mbox{ on }\partial\Omega, \quad\mbox{and}\quad\nabla \cdot v =0 \mbox{ in  }\Omega  \right\}. \label{eq4} 
\end{equation}

\noindent
Here $\nu$ denotes unit outward normal to $\partial \Omega.$ Multiplying (\ref{eq1}) by $v \in  V$ gives  the following problem for $u^\epsilon$ which does not  
involve  $p^\epsilon:$   Find $u^\epsilon \in V$ satisfying  
\begin{equation} 
\int\limits_{\Omega} \mu^\epsilon \nabla u^\epsilon \cdot   \nabla v  =  
\int\limits_{\Omega}  f \cdot  v \quad \forall  \quad   v \in V.  \label{eq5} 
\end{equation}

\noindent Lax-Milgram Lemma ensures existence and uniqueness of a solution $u^\epsilon\in V$ for (\ref{eq5}). 
To get the pressure field one applies  de Rham's  Theorem in the following form  \cite{GR}:  

\begin{equation}\label{eq6}  
V^\perp = \left\{ w\in H^{-1}(\Omega) ;\  \langle w,v \rangle_{H^{-1}(\Omega),H^1_0(\Omega)} = 0, \forall v\in V\right\} = \left\{ \nabla p ; \ \  p \in L^2 (\Omega) \right\} ,	  
\end{equation}
which implies that the pressure $p^{\epsilon}$ in (\ref{eq1}) belongs to $L^2(\Omega)$. 
Since $\Omega$ is a connected set, the pressure is defined up to an additive constant. 
To guarantee the uniqueness of the pressure, we seek $p$ in the space $L^2_0(\Omega)= \{f\in L^2(\Omega) : \int_\Omega f = 0\}$ with $L^2$ norm. 
Moreover, by using Poincar\'{e} inequality and inf-sup inequality \cite{GR}, one shows that the solution $(u^{\epsilon},p^{\epsilon})\in (H^1_{0}(\Omega))^d\times L^2_0(\Omega) $ of \eqref{eq1} are uniformly bounded, namely there exists a constant $C$, independent of $\epsilon$, such that  
\begin{equation}\label{zz2}
||u^\epsilon||_{(H^1_{0}(\Omega))^d} + ||p^\epsilon||_{L^2(\Omega)} \leq C ||f||_{(L^2(\Omega))^d}.
\end{equation}
We are interested here  in the homogenization limit of (\ref{eq1}),  that is the  asymptotic limit
of the solution  $(u^\epsilon, p^\epsilon)$ as $ \epsilon \rightarrow  0.$    
This problem is very classical and its solution by means of a combination of two-scale asymptotic expansions 
and the method of oscillating test functions was provided in various references, including \cite{BLP}, 
\cite{hornung}, \cite{SP}. We recall their main results and follow the notations of \cite{BLP} (cf. chapter I, section 10). 
The {\it homogenized tensor} $(A^{*})_{\alpha \beta}^{kl}$, 
which represents \lq\lq effective viscosity\rq\rq, is defined by   

\begin{equation}\label{eq7} 
 (A^{*})^{kl}_{\alpha \beta}  = \frac{1}{|\mathbb{T}^d|} \int\limits_{\mathbb{T}^d} \mu (y )   \nabla (\chi^{k}_\alpha +y_\alpha e_k) 
 :  \nabla (\chi^{l}_\beta +y_\beta e_l)\ dy ,    
\end{equation}
\noindent in which figure the cell  test functions $\{\chi^{k}_{\alpha};\ \alpha, k =1 \ldots d\}$ solutions of the following problem in the torus 
$\mathbb{T}^d$: 

\begin{equation}\label{eq8}  \left. \begin{array}{rllllll}
 -\nabla \cdot (\mu \nabla (\chi^{k}_\alpha +y_\alpha e_k)) + \nabla \pi^{k}_\alpha &=& 0  \mbox{ in  }\mathbb{T}^d \\[2mm]
 \nabla \cdot \chi^{k}_\alpha &=& 0 \mbox{ in } \mathbb{T}^d \\[2mm]
 (\chi^{k}_\alpha, \pi^{k}_\alpha) &&\mbox{is } Y-\mbox{periodic. } \end{array} \right\} 	  
\end{equation}
We impose $\int_{\mathbb{T}^d} \chi^{k}_\alpha\ dy = \int_{\mathbb{T}^d} \pi^{k}_\alpha\ dy = 0$ to obtain uniqueness of the solutions. 
It is easy to see that the above homogenized tensor possesses the following ``simple" symmetry, for any indices $1\leq\alpha,\beta,k,l \leq d$,
\begin{equation}\label{eq9A}
(A^{*})^{kl}_{\alpha \beta} = (A^{*})^{lk}_{\beta \alpha} ,
\end{equation}
which corresponds to the fact that the fourth-order tensor $A^{*}$ is a symmetric linear map from the set of all 
matrices (or second-order tensors) into itself. Since we follow the notations of \cite{BLP}, the simple symmetry 
\eqref{eq9A} seems a bit awkward since it mixes Latin and Greek indices but it is just the usual symmetry for 
a pair of indices $(k,\alpha)$ and $(l,\beta)$ in a fourth-order tensor. In other words, \eqref{eq9A} holds 
for a simultaneous permutation of $k,l$ and $\alpha,\beta$. 

\begin{theorem}\label{thm1.1} 
The homogenized limit of the problem (\ref{eq1}) is 

\begin{equation}\label{eq9} \left. \begin{array}{rllllll}
-\frac{\partial}{\partial x_\beta} \left((A^{*})^{kl}_{\alpha \beta} \frac{\partial u_k}{\partial x_\alpha} \right) + \frac{\partial  p}{\partial x_l }&=&  f_l 
\mbox{  in } \Omega , \mbox{ for } l=1,2,...,d,\\[2mm]
 \nabla \cdot  u &=& 0 \mbox{   in } \Omega , \\[2mm]
 u  &=& 0 \mbox{ on }\partial\Omega .
\end{array} \right\} 
\end{equation}

\noindent More precisely, we have the convergence of solutions: 

$$ 
(u^\epsilon, p^\epsilon) \rightharpoonup  (u, p)  \mbox{  in  } H^1_{0} (\Omega) \times 
 {L^2_0(\Omega)} \mbox{ weak. }  
$$
\hfill\qed\end{theorem}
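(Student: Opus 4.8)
The plan is to carry out Tartar's energy method (the method of oscillating test functions of \cite{BLP}), paying attention to the two pressure fields that enter because the natural corrector test function is not divergence free. By the a priori estimate \eqref{zz2} the family $(u^\epsilon,p^\epsilon)$ is bounded in $H^1_0(\Omega)^d\times L^2_0(\Omega)$ and $\mu^\epsilon\nabla u^\epsilon$ is bounded in $L^2(\Omega)^{d\times d}$, so along a subsequence (not relabelled) $u^\epsilon\rightharpoonup u$ in $H^1_0(\Omega)^d$, $p^\epsilon\rightharpoonup p$ in $L^2_0(\Omega)$ and $\mu^\epsilon\nabla u^\epsilon\rightharpoonup\sigma$ in $L^2(\Omega)^{d\times d}$, all weakly. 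Weak closure of $H^1_0$, passing to the limit in $\nabla\cdot u^\epsilon=0$ and in $\int_\Omega p^\epsilon=0$ give $u\in V$ and $\int_\Omega p=0$; by Rellich, $u^\epsilon\to u$ strongly in $L^2(\Omega)^d$. Passing to the limit in the weak formulation of \eqref{eq1} with a fixed test function $v\in C^\infty_c(\Omega)^d$ yields the limit momentum balance $-\nabla\cdot\sigma+\nabla p=f$ in $H^{-1}(\Omega)^d$. The whole point is then to identify $\sigma=A^{*}\nabla u$.

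\emph{Adapted test functions.} Given a constant matrix $M=(M^l_\beta)$, let $\chi_M=\sum_{l,\beta}M^l_\beta\chi^l_\beta$ and $\pi_M=\sum_{l,\beta}M^l_\beta\pi^l_\beta$ solve the cell problem \eqref{eq8}, and set $\eta^\epsilon_M(x)=M+(\nabla\chi_M)(x/\epsilon)$, $w^\epsilon_M(x)=Mx+\epsilon\,\chi_M(x/\epsilon)$ so that $\nabla w^\epsilon_M=\eta^\epsilon_M$ and $\nabla\cdot w^\epsilon_M=\mathrm{tr}\,M$, and $\pi^\epsilon_M(x)=\pi_M(x/\epsilon)$. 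Then $w^\epsilon_M\to Mx$ strongly in $L^2(\Omega)^d$ (since $\chi_M\in L^2(\mathbb T^d)$), whereas $\mu^\epsilon\eta^\epsilon_M=(\mu\eta_M)(\cdot/\epsilon)\rightharpoonup\langle\mu\eta_M\rangle$ and $\pi^\epsilon_M\rightharpoonup0$ weakly in $L^2$; moreover the scaled cell equation reads $-\nabla\cdot(\mu^\epsilon\eta^\epsilon_M)+\nabla\pi^\epsilon_M=0$ in $\mathcal D'(\mathbb R^d)$. Testing \eqref{eq8} against the correctors and invoking the definition \eqref{eq7} and the symmetry \eqref{eq9A} shows $\langle\mu\eta_M\rangle^k_\alpha=(A^{*})^{kl}_{\alpha\beta}M^l_\beta$, i.e. $\mu^\epsilon\eta^\epsilon_M\rightharpoonup A^{*}M$ weakly in $L^2$.

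\emph{Identification of the flux and conclusion.} Fix $\phi\in C^\infty_c(\Omega)$. Insert $v=\phi\,w^\epsilon_M\in H^1_0(\Omega)^d$ into the weak form of \eqref{eq1} and $\psi=\phi\,u^\epsilon\in H^1_0(\Omega)^d$ into the weak form of the scaled cell equation, and subtract: since $\mu^\epsilon$ is scalar, the two cross terms $\int_\Omega\phi\,\mu^\epsilon\nabla u^\epsilon:\eta^\epsilon_M$ coincide and drop out, leaving an identity in which each remaining integrand is a product of a weakly convergent factor and a strongly convergent one (using $u^\epsilon\to u$ and $w^\epsilon_M\to Mx$ in $L^2$, together with the weak limits of $\mu^\epsilon\nabla u^\epsilon$, $p^\epsilon$, $\mu^\epsilon\eta^\epsilon_M$, $\pi^\epsilon_M$). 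Letting $\epsilon\to0$ and subtracting from the result the limit momentum balance tested against $v=\phi\,Mx$, all pressure terms and all first-order terms cancel, and after integrating by parts the corrector term ($\int_\Omega(A^{*}M):(u\otimes\nabla\phi)=-\int_\Omega\phi\,(A^{*}M):\nabla u$) one obtains
\[
\int_\Omega\phi\,\big(\sigma-A^{*}\nabla u\big):M\,dx=0\qquad\text{for all }\phi\in C^\infty_c(\Omega)\ \text{and all constant }M,
\]
hence $\sigma=A^{*}\nabla u$ a.e., so $(u,p)\in V\times L^2_0(\Omega)$ solves \eqref{eq9}. Finally, from \eqref{eq7} one has $(A^{*}M):M=|\mathbb T^d|^{-1}\int_{\mathbb T^d}\mu\,|M+\nabla\chi_M|^2\,dy\ge\mu_0|M|^2$, so $A^{*}$ is coercive; by Lax--Milgram on $V$ and de Rham's theorem \eqref{eq6}, the solution of \eqref{eq9} is unique, and since the limit is uniquely determined the whole family $(u^\epsilon,p^\epsilon)$ converges, which is the asserted convergence.

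The main obstacle is precisely the bookkeeping of the two pressures, and it is essentially the only place where incompressibility intervenes in this classical argument: unlike in the elasticity system, the corrector $w^\epsilon_M$ is \emph{not} divergence free ($\nabla\cdot w^\epsilon_M=\mathrm{tr}\,M\ne0$ in general), so it cannot be fed into the pressure-free formulation \eqref{eq5}; one is forced to work with the full system \eqref{eq1} together with the cell pressure $\pi^\epsilon_M$ and to choose the pairing of test functions above so that every pressure contribution cancels in the limit. (The subtler phenomenon flagged in the abstract, non-smoothness of the bottom of the Bloch spectrum, pertains to the spectral reinterpretation of $A^{*}$ carried out later and plays no role in this compactness proof.)
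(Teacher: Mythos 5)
Your proof is correct, but it is not the paper's proof: you have carried out Tartar's energy method (oscillating test functions), which is precisely the classical route the paper cites to \cite{BLP}, \cite{hornung}, \cite{SP} and then deliberately avoids. The paper's own proof of Theorem \ref{thm1.1} (Section \ref{sec5}) localizes \eqref{eq1} by a cut-off, applies the Bloch transforms $B^\epsilon_{m,\hat\eta}$ built from the $(d-1)$ Rellich branches at the bottom of the spectrum, passes to the limit using strongly convergent Taylor expansions of $\lambda_{m,\hat\eta}$, $q_{m,\hat\eta}$, $q_{0,m,\hat\eta}$, $\phi_{m,\hat\eta}$ (so that the propagation relation \eqref{eq3.10} injects $A^{*}$ into the limit), and only then returns to physical space via de Rham's theorem and the trace identity $(A^{*})^{kl}_{\alpha l}=M_{\mathbb T^d}(\mu)\delta_{k\alpha}$ to kill the spurious pressure $q$. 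Your argument is shorter and self-contained, and your handling of the genuine obstruction — the corrector $w^\epsilon_M$ is not divergence free, so you must work with the pressure formulation and carry the cell pressure $\pi^\epsilon_M$ so that all pressure contributions appear as weak--strong products and cancel — is the correct classical counterpart of the Lagrange-multiplier bookkeeping ($q_{0,m,\hat\eta}$, $q_{m,\hat\eta}$) that the paper must do on the spectral side. The individual steps check out: the identity $\langle\mu\,\eta_M\rangle_{\alpha k}=(A^{*})^{kl}_{\alpha\beta}M^l_\beta$ does follow from testing \eqref{eq8} against the divergence-free correctors together with \eqref{eq9A}; the cross terms cancel because $\mu^\epsilon$ is scalar; the term $\int\pi^\epsilon_M\,\nabla\phi\cdot u^\epsilon$ vanishes in the limit since $\pi^\epsilon_M\rightharpoonup 0$ and $u^\epsilon\to u$ strongly; and coercivity of $A^{*}$ plus de Rham gives uniqueness and hence whole-sequence convergence. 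What your route does not buy — and what the Bloch route is designed to deliver — is the spectral interpretation of $A^{*}$ (the propagation relation, and the discussion of which part of $A^{*}$ is actually determined by Bloch data in Section \ref{sec4}), which is the paper's actual contribution; conversely, the Bloch proof never needs a div--curl/compensated-compactness structure because the Taylor approximations of the Bloch eigenelements converge strongly.
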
 

\noindent
Note that the simple symmetry \eqref{eq9A} does not imply that $A^{*}$ is symmetric in $k,l$ or in $\alpha,\beta$. 
However, in the homogenized equation \eqref{eq9}, since $A^{*}$ is constant, only its symmetric version, obtained 
by symmetrizing in both $k,l$ and $\alpha,\beta$, plays a role. 

Let us next consider the second model of incompressible elasticity :
\begin{equation}\label{sop}  \left. \begin{array}{rllllll} 
 -\nabla  \cdot      (\mu^\epsilon E(u_s^\epsilon))  + \nabla p_s^\epsilon &=& f \mbox{  in } \Omega, \\[2mm]
 \nabla \cdot  u_s^\epsilon &=&  0 \mbox{   in } \Omega,  \\[2mm]
 u_s^\epsilon  &=& 0   \mbox{ on }\partial\Omega. \\[2mm]
 \end{array} \right\}  
\end{equation}
Here the strain rate tensor is given by 
$$ 
E (v) =  \frac{1}{2}\lb \nabla v + \nabla^t  v \rb \quad \mbox{ namely } \quad 
E_{kl} (v) = \frac{1}{2} \left(\frac{\partial  v_k}{\partial  x_l}  + \frac{\partial v_l}{\partial x_k} \right). 
$$
As before, there exists a unique solution $(u_s^{\epsilon},p_s^{\epsilon})$ of the above problem \eqref{sop} in 
$(H^1_{0}(\Omega))^d\times L^2_0(\Omega)$ and using Korn's inequality and the inf-sup inequality, the following uniform bound can be proved :
\begin{equation}\label{zz3}
||u_s^\epsilon||_{(H^1_{0}(\Omega))^d} + ||p_s^\epsilon||_{L^2(\Omega)} \leq C ||f||_{(L^2(\Omega))^d} ,
\end{equation}
where the constant $C$ does not depend on $\epsilon$. 
Here the {\it homogenized tensor} ${(A^{*}_s)}_{\alpha \beta} ^{kl}$ is given by 
\begin{equation}\label{zz1}
 {(A^{*}_s)}^{kl}_{\alpha \beta}  = \frac{1}{|\mathbb{T}^d|} \int\limits_{\mathbb{T}^d} \mu (y )   E (\widetilde{\chi}^{k}_\alpha +y_\alpha e_k) : E (\widetilde{\chi}^{l}_\beta +y_\beta e_l ) 	
\end{equation}
\noindent where the cell test functions $\widetilde{\chi}^{k}_{\alpha}$ are now solutions in the torus $\mathbb{T}^d$ of 

\begin{equation} \left. \begin{array}{rllllll}
 -\nabla \cdot (\mu E (\widetilde{\chi}^{k}_\alpha +y_\alpha e_k)) + \nabla \widetilde{\pi}^{k}_\alpha &=& 0  \mbox{ in  }\mathbb{T}^d \\[2mm]
 \nabla \cdot \widetilde{\chi}^{k}_\alpha &=& 0 \mbox{ in } \mathbb{T}^d \\[2mm]
 (\widetilde{\chi}^{k}_\alpha, \widetilde{\pi}^{k}_\alpha) && \mbox{is } Y- \mbox{ periodic } \end{array} \right\} 
\end{equation}
\noindent We impose $\int_{\mathbb{T}^d} \widetilde{\chi}^{k}_\alpha = \int_{\mathbb{T}^d} \widetilde{\pi}^{k}_\alpha =0$.  It is known \cite{CHK} that the above homogenized tensor possesses the following \lq \lq full" symmetry, 
for any indices $1\leq\alpha,\beta,k,l \leq d$,
\begin{equation}\label{eq9sA}
{(A_s^{*})}^{kl}_{\alpha \beta} = {(A_s^{*})}^{\alpha l}_{k \beta} = {(A_s^{*})}^{k \beta}_{\alpha l} = {(A_s^{*})}^{lk}_{\beta \alpha}, 
\end{equation}
which corresponds to the fact that the fourth-order tensor $A^{*}_s$ is a symmetric linear map from the set of all 
symmetric matrices into itself (the conditions \eqref{eq9sA} are the usual symmetry conditions for Hooke's laws in 
linearized elasticity). 
The homogenization limit of the problem (\ref{sop}) is again of the form \eqref{eq9} with $A^{*}_s$ replacing $A^{*}$. 

The first goal of this paper is to give an alternate proof of Theorem \ref{thm1.1} using the Bloch Wave Method 
instead of two-scale asymptotic expansions and the method of oscillating test functions. The notion of Bloch 
waves is well-known in physics and mathematics \cite{BLP}, \cite{CPV}, \cite{RS}, \cite{wilcox}. Bloch waves 
are eigenfunctions of a family of \lq \lq shifted \rq\rq spectral problems in the unit cell $Y$ for the corresponding 
differential operator. Its link with homogenization theory was first explored 
in \cite{BLP}, \cite{CV}, \cite{MB}, \cite{SaSy}. The key point is that the homogenized 
operator can be defined in terms of differential properties of the bottom of the Bloch spectrum. 
The second goal of this paper is to explore this issue which is especially delicate in the case of Stokes 
equations. Indeed, it was discovered in \cite{ACFO} that the Bloch spectrum for the Stokes equations is not regular enough at the origin because of the incompressibility constraint. Therefore, its differential 
properties are all the more intricate to establish. Here we complete the task started in \cite{ACFO} 
and in particular we prove a conjecture of \cite{ACFO} on the homogenization of the Stokes system \eqref{eq1}. 
Since the treatment of the incompressible elasticity system \eqref{sop} is almost analogous to that of \eqref{eq1}, 
we focus on \eqref{eq1} and we content ourselves in highlighting the main differences for \eqref{sop} throughout the sequel. 

The Bloch wave method for scalar equations and systems without differential constraints (like the incompressibility 
condition) was studied in \cite{COV,CV,GV,GV2}. 
In such cases, this approach gives a spectral representation of the homogenized tensor $A^{*}=(A^{*})^{kl}_{\alpha\beta}$ in terms of the lowest energy Bloch waves and their behaviour for small momenta (what we call the bottom of the spectrum).
For instance, the homogenized matrix in the scalar case was found to be equal to one - half of the Hessian of the ground energy 
(or first eigenvalue) at zero momentum. For a system, several bottom eigenvalues play a role and they are merely directionally differentiable by lack of simplicity. In the present case of the Stokes system, the situation is  more complicated. The main characteristic of the Stokes system is the  presence of the differential constraint expressing incompressibility of the fluid. One of its effects is that the Bloch energy levels are 
degenerate and the corresponding eigenfunctions are discontinuous at zero momentum. Even though  energy levels are  continuous at zero momentum, the second order derivatives are not (cf. Theorem \ref{thm3.1}).
Thus, we cannot really make sense of the eigenvalue Hessian at zero momentum. Further, it is not clear if the homogenized tensor can be fully recovered  from the Bloch spectral data.
In fact, this issue  is  left open in \cite{ACFO}. In the non-self adjoint case treated in \cite{GV2}, only the symmetric part of the homogenized matrix is determined by Bloch 
spectral data and this is enough to determine the homogenized operator uniquely. Combining all these difficulties, the homogenization of Stokes system using Bloch waves is an interesting issue  which is not a direct extension of previous results. 
Our work, roughly speaking, shows that Bloch spectral data does not determine 
the homogenized tensor uniquely, but determines the homogenized operator uniquely. This is in sharp contrast with the linear elasticity system treated in  \cite{GV} in which the homogenized tensor was uniquely determined from Bloch spectral data. We see thus the effect of differential constraints (the incompressibility condition in the case of Stokes equations) on the homogenization process via Bloch wave method. For further discussion on this point,  see Section \ref{sec4}. Bloch wave method of homogenization presented in Section \ref{sec5} consists of localizing \eqref{eq1}, taking its Bloch transform and passing to the limit to get the localized version of homogenized system in the Fourier space. Passage to the limit in the Bloch method is straight forward, though arguments are long. We do not run into the classical difficulty of having a product of two weakly convergent sequences. In fact, we use the Taylor approximation of Bloch spectral elements which gives strongly convergent sequences. This is one of the known features of the method. The required homogenized system is obtained by making a passage to the physical space from the Fourier space. Extracting macro constitutive relation and macro balance equation from the localized homogenized equation in the Fourier space turns out to be not very straight forward because of differential constraints.  
       
The plan of this paper is as follows. In section \ref{sec2}, we recall from \cite{ACFO} the properties of Bloch waves associated with the Stokes operator. It turns out that the Bloch waves and their energies can be chosen to be directionally regular, upon modifying the spectral cell problem at zero momentum. Bloch transform using eigenfunctions lying at the bottom of the spectrum is also introduced in this section. Its asymptotic behaviour for low momenta is also described. Next, Section \ref{sec3} is devoted to the computation of directional derivatives of Bloch spectral data. Even though these results are essentially borrowed from \cite{ACFO}, some new ones are also included because of their need in the sequel. In particular we derive the so-called propagation relation linking the homogenized tensor $A^{*}$ with Bloch spectral data, and the extent to which it determines homogenized tensor is studied in Section \ref{sec4}.  Using this information, we prove Theorem \ref{thm1.1}  in Section \ref{sec5} following the Bloch wave homogenization method.

\section{Bloch waves}\label{sec2}
\setcounter{equation}{0} 

In this section, we introduce Bloch waves associated to the Stokes operator following the lead of \cite{ACFO}. 
The Bloch waves are defined by considering the shifted (or translated) eigenvalue problem in the torus 
$\mathbb{T}^d$ parametrized by elements in the dual torus which we take as $\mathbb{T}^d$ again. 
We denote by $y$ the points of the original torus and by $\eta$ the points of the dual torus.   
The spectral Bloch problem amounts to find $\lambda = \lambda (\eta) \in \mathbb{R}$, 
$\phi =\phi  (\eta)  \in  (H^1 (\mathbb{T}^d))^d$, with $\phi \neq 0$ and  $\pi = \pi (\eta) \in L^2 (\mathbb{T}^d)$, 
satisfying 

\begin{equation}\label{eq2.1} 
\left. \begin{array}{rllllll}
- D (\eta) \cdot (\mu D (\eta) \phi) + D (\eta) \pi  &=& \lambda  (\eta) \phi \mbox{ in } \mathbb{T}^d ,\\[2mm]
 D (\eta) \cdot \phi &=& 0 \mbox{ in } \mathbb{T}^d ,\\[2mm]
 (\phi, \pi)  \mbox{ is } Y &-& \mbox{ periodic,} \\[2mm]
\int\limits_{Y} |\phi|^2 dy &=&1.
\end{array} \right\}  
\end{equation}

\noindent 
The solutions of \eqref{eq2.1} are a priori complex valued, so all functional spaces are complex valued too. 
Here, we denote 
$$
D (\eta) =\nabla+ i\eta
$$ 
the shifted gradient operator, with $i$ the imaginary root $\sqrt{-1}$. 
Its action on a vector function $\phi$ yields a matrix: $(D (\eta) \phi )_{kl} = \frac{\partial \phi_l}{\partial y_k} + i \eta_k  \phi_l$ for all $k, l=1, \ldots ,d$.   
The corresponding divergence operation yields a scalar: $D (\eta) \cdot \phi  =\frac{\partial \phi_k}{\partial y_k} + i  \eta_k  \phi_k.$  Analogously, if  $\phi$ is a  matrix function 
then its shifted divergence $D (\eta) \cdot \phi$ is a vector function obtained by acting $D (\eta)$ on the column  vectors  of $\phi.$ 

The main feature of (\ref{eq2.1}) is that the state space keeps varying with $\eta$ due to the differential constraints defined by the   incompressibility of the fluid. That is why, the standard spectral theory for elliptic operators does not apply as such; it has to be modified. This is accomplished in   \cite{OZ}. Secondly, it is easily seen that when $\eta=0$, the corresponding eigenvalue $\lambda (0)$ is equal to zero and its multiplicity is $d$. In fact, we can take $e_k, k=1 \ldots d$ as eigenvectors (with corresponding eigen-pressure  being zero). Because of this degeneracy, spectral elements of  (\ref{eq2.1}) are not 
guaranteed to be  smooth at  $\eta=0.$  Lack of regularity of the Bloch spectrum at $\eta=0$ is an issue because the representation of the homogenized tensor in terms of Bloch spectral elements is then not clear.    To overcome this difficulty, the idea is to consider directional regularity as we approach $\eta=0$ \cite{GV}. Accommodating the directional limit at $\eta=0$ requires a  modification of the above  shifted problem with the addition of a new constraint and corresponding Lagrange multiplier in the equation \cite{ACFO}.  Fixing a direction $e \in \mathbb{R}^d, |e|  =1$ and taking  $\eta=\delta e,$ with $\delta>0$, we 
consider the modified problem: find  $\lambda(\delta) \in  \mathbb{R}$, $\phi(.;\delta) \in  (H^1 (\mathbb{T}^d))^d$, 
$q(.;\delta) \in L^2_0(\mathbb{T}^d)$ where $L^2_0(\mathbb{T}^d) = \{ q \in L^2 (\mathbb{T}^d); \  \int_{\mathbb{T}^d}q = 0 \}$
and  $q_0(\delta) \in  \mathbb{C}$ satisfying   
 
\begin{equation} \left. \begin{array}{rllllll}
\displaystyle{ -D (\delta e) \cdot  (\mu(y)   D (\delta e )\phi(y;\delta))  +  D  (\delta e ) q(y;\delta)  +  q_0 (\delta)  e  } &=& \displaystyle{  \lambda(\delta)  \phi(y;\delta)  \mbox{   in } \mathbb{T}^d ,} \\[2mm]
\displaystyle{  D (\delta e) \cdot \phi(y;\delta) } &=&  0  \mbox{  in }  \mathbb{T}^d , \\  
\displaystyle{ e \cdot \int\limits_{\mathbb{T}^d}  \phi(y;\delta)  dy  }&=& 0, \\
 (\phi, q)  \mbox{ is } Y &-& \mbox{ periodic,} \\[2mm]
\displaystyle{ \int\limits_{\mathbb{T}^d} |\phi(y;\delta)|^2  dy } &=& 1. 
\end{array} \right\} 	  \label{eq2.2} 
\end{equation}
Note that if $\delta \neq 0$ then the relation $e \cdot \int\limits_{\mathbb{T}^d}  \phi(.;\delta) =0$ can be 
easily obtained from $D (\delta e)\cdot\phi(.;\delta) =0$ simply  by integration.  (However, this is not the case if $\delta =0$.) 
Hence (\ref{eq2.2}) is the same as (\ref{eq2.1}) provided  $\delta \neq 0$ and $\eta = \delta e$.  
However for $\delta=0, $ (\ref{eq2.1}) is not good because the condition 
$e \cdot \int\limits_{\mathbb{T}^d}  \phi(.;\delta) =0$ is not included. 
See \cite{ACFO} on the appearance of this new constraint and the corresponding 
Lagrange multiplier $q_0(\delta)e$. 

It is natural to consider the system (\ref{eq2.2}) with $\delta$ small as a perturbation from the following one 
which corresponds to $\delta =0$. 
We fix a unit vector $\hat{\eta}\in \mathbb{S}^{d-1}$ and we consider the eigenvalue problem: 
find $\nu (\hat{\eta})\in\mathbb{R}$, $w(. ,\hat{\eta}) \in  (H^1 (\mathbb{T}^d))^d$, 
$q(.;\hat{\eta}) \in L^2_0(\mathbb{T}^d)$ and $q_0(\hat{\eta}) \in \mathbb{C}$ satisfying   
\begin{equation} \left. \begin{array}{cccc}
-\nabla  \cdot  (\mu \nabla w) + \nabla q  + q_0  \hat{\eta}  = \nu (\hat{\eta}) w  \mbox{ in } \mathbb{T}^d ,\\[2mm]
\nabla \cdot w =0  \mbox{ in } \mathbb{T}^d ,\\[2mm]
\hat{\eta} \cdot \int\limits_{Y}  w dy =0 ,\\[2mm]
(w, q) \mbox{ is } Y -\mbox{ periodic,} \\[2mm]
\int\limits_{Y} |w|^2 dy =1 .
\end{array} \right\} 	  \label{eq2.4} 
\end{equation}

Existence  of eigenvalues and eigenvectors for either (\ref{eq2.2}) or \eqref{eq2.4} is proved in 
\cite{ACFO}. Let us recall their result, by specializing to the eigenvalue $\nu(\hat{\eta}) =0$ of  
(\ref{eq2.4}). Note that $\nu(\hat{\eta})=0$  is clearly an eigenvalue of 
multiplicity   $(d-1)$ of  (\ref{eq2.4}) with corresponding eigenfunctions being constants, namely 
$q^0_{m,\hat{\eta}} =0$, $q^0_{0, m,\hat{\eta}} = 0$ and  $\phi_{m,\hat{\eta}}^0 (y)$ is a constant 
unit vector of $\mathbb{R}^d$ orthogonal to $\hat{\eta}$ for $m=1, \ldots , (d-1)$, say 
$\{\phi^0_{1,\hat{\eta}}, \ldots \phi^0_{d-1,\hat{\eta}}\}$. 
Doing perturbation analysis of the above situation, the following result was proved in \cite{ACFO}.

\begin{theorem}\label{thm2.1}
Fix $\hat{\eta}\in \mathbb{S}^{d-1}$. Consider the first $(d-1)$ eigenvalues of (\ref{eq2.2}). 
There exists $\delta_0>0$ and exactly $(d-1)$ analytic functions defined in the real interval $|\delta|\leq \delta_0$, 
$\delta \mapsto  \lb \lambda_{m,\hat{\eta}}(\delta), \  \phi_{m,\hat{\eta}}(.;\delta), \ q_{m,\hat{\eta}}(.;\delta), \ q_{0, m,\hat{\eta}}(\delta) \rb$, for $m= 1, \ldots , (d-1)$, with values in $\mathbb{R} \times (H^1 (\mathbb{T}^d))^d \times 
 L^2_0 (\mathbb{T}^d) \times \CC$, such that 
\begin{enumerate}
\item[(i)] $\displaystyle{ \lambda_{m,\hat{\eta}}(\delta) \bigg|_{\delta =0}  =  0,     \  \phi_{m,\hat{\eta}}(.;\delta)  \bigg|_{\delta =0}   = \phi^0_{m,\hat{\eta}} ,  \ q_{m,\hat{\eta}}(.;\delta)  \bigg|_{\delta =0} = \ q_{0,m,\hat{\eta}}(\delta)  \bigg|_{\delta =0} = 0, }$

\item[(ii)]   $\displaystyle{ \lb \lambda_{m,\hat{\eta}}(\delta),  \  \phi_{m,\hat{\eta}}(.;\delta),   \ q_{m,\hat{\eta}}(.;\delta) ,   \  q_{0, m,\hat{\eta} }(\delta) \ \rb }$ 
satisfies (\ref{eq2.2}).  

\item[(iii)] The set $ \displaystyle{  \left\{ \phi_{1,\hat{\eta}}(.;\delta), \ldots  .   \phi_{(d-1),\hat{\eta}}(.;\delta) \right\} } $ is  
orthonormal in $(L^2 (\mathbb{T}^d))^d. $

\item[(iv)]  For each interval $I \subset \mathbb{R}$ with $\overline{I}$ containing exactly the eigenvalue  $\nu(\hat{\eta})=0$ of  (\ref{eq2.4}) (and no other eigenvalue of  (\ref{eq2.4}) then $\left\{\lambda_{1,\hat{\eta}}(\delta) \ldots,  
\lambda_{(d-1),\hat{\eta}}(\delta)\right\}$ are the only eigenvalues of  (\ref{eq2.2})  (counting multiplicities) lying 
in the interval  $I.$   
\end{enumerate} 
\hfill\qed\end{theorem}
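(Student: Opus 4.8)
\noindent I would prove Theorem~\ref{thm2.1} by analytic (Rellich--Kato) perturbation theory, with parameter $\delta$ and with the operator underlying \eqref{eq2.4} at $\nu(\hat\eta)=0$ as the unperturbed one. The first step is to rewrite \eqref{eq2.2} as a constrained self-adjoint eigenvalue problem, eliminating the Lagrange multipliers. Introduce the non-negative sesquilinear form $a_\delta(\phi,\psi)=\int_{\mathbb{T}^d}\mu\, D(\delta e)\phi:\overline{D(\delta e)\psi}\,dy$ and the constraint operator $B_\delta\phi=\big(D(\delta e)\cdot\phi,\ e\cdot\int_{\mathbb{T}^d}\phi\big)$ from $(H^1(\mathbb{T}^d))^d$ into $L^2_0(\mathbb{T}^d)\times\mathbb{C}$. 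Then \eqref{eq2.2} is equivalent to finding $\lambda(\delta)\in\mathbb{R}$ and $0\neq\phi(\cdot;\delta)\in V_\delta:=\ker B_\delta$ with $a_\delta(\phi,\psi)=\lambda(\delta)\,(\phi,\psi)_{L^2}$ for all $\psi\in V_\delta$, the pair $(q,q_0)$ being recovered afterwards from the residual of the first line of \eqref{eq2.2} via the closed-range (uniform inf--sup) property of $B_\delta$ established in \cite{ACFO,OZ}. Since $D(\delta e)=\nabla+i\delta e$, the form $a_\delta$ depends polynomially on $\delta$, hence extends holomorphically to complex $\delta$.

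\medskip\noindent The second step, which I expect to be the main obstacle, is to transport the problem onto a fixed Hilbert space, since the form domain $V_\delta$ moves with $\delta$. Using again the uniform closed-range property of $B_\delta$, I would build a family of isomorphisms $U(\delta)$ with $U(0)=\mathrm{Id}$, holomorphic in $\delta$, carrying $V_0$ onto $V_\delta$ and $L^2$-isometric onto the relevant closed subspaces; this is precisely the modification of the shifted spectral problem carried out in \cite{OZ,ACFO}. Conjugation then produces a holomorphic family of closed, densely defined, bounded-below forms $\widetilde{a}_\delta(\phi,\psi):=a_\delta(U(\delta)\phi,U(\delta)\psi)$ on the fixed domain $V_0$, i.e.\ a self-adjoint holomorphic family of type (B) in Kato's sense; let $\mathcal{A}(\delta)$ be the associated operators, which have compact resolvent.

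\medskip\noindent The third step is to identify the unperturbed spectrum and apply Kato's theorem. At $\delta=0$, $a_0(\phi,\phi)=\int_{\mathbb{T}^d}\mu|\nabla\phi|^2$ vanishes on $V_0$ exactly on constant vectors orthogonal to $\hat\eta$, so $0$ is an eigenvalue of $\mathcal{A}(0)$ of multiplicity exactly $d-1$, with eigenspace the span of $\{\phi^0_{1,\hat\eta},\dots,\phi^0_{d-1,\hat\eta}\}$; by discreteness of the spectrum of \eqref{eq2.4} (\cite{ACFO}) it is isolated, the rest of the spectrum of $\mathcal{A}(0)$ lying in $[\nu_d(\hat\eta),\infty)$ with $\nu_d(\hat\eta)>0$. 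Rellich--Kato theory then provides $\delta_0>0$ and exactly $d-1$ real-analytic eigenvalue branches $\lambda_{m,\hat\eta}(\delta)$ on $|\delta|\le\delta_0$ together with a holomorphic total eigenprojection $P(\delta)$, and Kato's construction of a holomorphic eigenbasis yields an analytic $L^2$-orthonormal frame of the range of $P(\delta)$ issuing from $\{\phi^0_{m,\hat\eta}\}$; this gives (iii), and transporting back by $U(\delta)$ gives the analytic eigenvectors $\phi_{m,\hat\eta}(\cdot;\delta)$ of \eqref{eq2.2}. The multipliers $q_{m,\hat\eta},q_{0,m,\hat\eta}$ then inherit analyticity from the bounded right inverse of $B_\delta$ of the first step, and they vanish at $\delta=0$ because $\mathcal{A}(0)\phi^0_{m,\hat\eta}=0$ with $\phi^0_{m,\hat\eta}$ constant forces $\nabla q+q_0\hat\eta=0$, and periodicity of $q$ with zero mean then forces $q_0=0$ and $q=0$; this proves (i)--(ii).

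\medskip\noindent Finally, (iv) follows from the stability of a spectral gap under analytic perturbation (upper and lower semicontinuity of the part of the spectrum enclosed by a fixed contour, as in \cite{ACFO}): after possibly shrinking $\delta_0$, for $|\delta|\le\delta_0$ the branches $\lambda_{1,\hat\eta}(\delta),\dots,\lambda_{d-1,\hat\eta}(\delta)$ are all the eigenvalues of \eqref{eq2.2} in any interval $I$ isolating $0$. I would also remark that for $\delta\neq0$ the constraint $e\cdot\int_{\mathbb{T}^d}\phi=0$ and the multiplier $q_0$ are redundant — integrate $D(\delta e)\cdot\phi=0$ over $\mathbb{T}^d$ — so the branches solve the shifted problem \eqref{eq2.1} itself, the modification mattering only at $\delta=0$, where it pins down the correct directional limit. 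Apart from the fixed-space reduction of the second step, the remainder is the standard Kato machinery once \cite{OZ,ACFO} supply the uniform inf--sup bound for $B_\delta$.
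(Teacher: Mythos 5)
Your proposal is correct and follows essentially the route the paper relies on: the paper gives no proof of Theorem \ref{thm2.1} but imports it from \cite{ACFO}, where it is obtained exactly by this kind of Rellich--Kato analytic perturbation argument about the $(d-1)$-fold degenerate eigenvalue $0$ of \eqref{eq2.4}, after reformulating \eqref{eq2.2} as a constrained self-adjoint problem and transporting the $\delta$-dependent constraint space onto a fixed one. You correctly identify the fixed-space reduction as the crux and the role of the added constraint $e\cdot\int_{\mathbb{T}^d}\phi=0$ and multiplier $q_0$ in restoring holomorphy of the constraint projections at $\delta=0$, which is precisely the modification made in \cite{OZ,ACFO}.
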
 

The above theorem says that there are $(d-1)$ smooth curves emanating  out of the zero eigenvalue 
as $\delta$ varies in an interval $(-\delta_0,  \delta_0)$.  We call them Rellich branches. Using them, 
for $m= 1, \ldots , (d-1)$, we can define 
the corresponding $m^{th}$ Bloch transform of  $g \in \left(L^2 (\mathbb{R}^d)\right)^d $  via the expression
\begin{equation}\label{bloch-transformation1}
B_{m,\hat{\eta}}^\epsilon g  (\xi) =  \int\limits_{\mathbb{R}^d} g (x)  \cdot 
\overline{\phi_{m,\hat{\eta}}  \left(\frac{x}{\epsilon}, \delta \right)}   e^{-i x \cdot \xi}\ dx, 
\end{equation}
where $\delta  =\delta (\epsilon, \xi) = \epsilon |\xi|$ and $\hat{\eta} =  \xi/|\xi|.$ This is 
well defined provided $\epsilon$ is  sufficiently small so that $\epsilon |\xi|  \leq  \delta_0$. 
For other $\xi$, we define $B^\epsilon_{m,\hat{\eta}} g(\xi) =0.$  \\
\\
For later purposes we need the Bloch transform for $(H^{-1}(\mathbb{R}^d))^d$ elements also.
Let us consider $F \equiv  ( g^0 + \sum_{j=1}^d \frac{\partial}{\partial x_j}g^j ) \in (H^{-1}(\mathbb{R}^d))^d$, 
where $F, g^0, g^1,...,g^d$ are valued in $\mathbb{C}^d$ and $g^j\in ((L^2(\mathbb{R}^d))^d$ for $j=0,1,...,d$. 
Then we define $B^{\epsilon}_{m,\hat{\eta}} F(\xi)$ in $L^2_{loc}(\mathbb{R}^d_{\xi})$ by
\begin{equation}\begin{aligned}\label{bloch-transformation2}
B^{\epsilon}_{m,\hat{\eta}} F(\xi) :=  \int_{\mathbb{R}^d}  g^0(x)\cdot \overline{\phi_{m,\hat{\eta}}}\left(\frac{x}{\epsilon};\delta \right)e^{-ix\cdot\xi} dx &+ \int_{\mathbb{R}^d} i \sum_{j=1}^d \xi_j g^j(x)\cdot \overline{\phi_{m,\hat{\eta}}}\left(\frac{x}{\epsilon};\delta\right)e^{-ix\cdot\xi}dx \\
 &- \epsilon^{-1}\int_{\mathbb{R}^d} \sum_{j=1}^d g^j(x)\cdot\frac{\partial\overline{\phi_{m,\hat{\eta}}}}{\partial y_j}\left(\frac{x}{\epsilon};\delta\right)e^{-ix\cdot\xi} dx \,.
\end{aligned}
\end{equation}
Definition \eqref{bloch-transformation2} is independent of the representation used for $F\in (H^{-1}(\mathbb{R}^d))^d$ 
in terms of $\{g^j ,\ j=0,...,d\}$ and is consistent with 
the previous definition \eqref{bloch-transformation1} whenever $F\in (L^2(\mathbb{R}^d))^d$. 
\begin{remark}
Due to the property  $\nabla\cdot(e^{ix\cdot\xi}\phi^\epsilon_{m,\hat{\eta}})=0$ in $\mathbb{R}^d$, we see from \eqref{bloch-transformation2} that
\begin{equation}\label{invariant}
 B^\epsilon_{m,\hat{\eta}}(F +\nabla \psi)(\xi) = B^\epsilon_{m,\hat{\eta}}F(\xi), \mbox{ for all }\psi\in L^2(\mathbb{R}^d). 
\end{equation}
In fact, by considering $ \nabla \psi = g^0 + \sum_{j=1}^d \frac{\partial}{\partial x_j}g^j\in (H^{-1}(\mathbb{R}^d))^d$ then we can take  $g_0 = 0$ and $g_j = \psi e_j$ $j=1,\ldots,d$, in \eqref{bloch-transformation2} to obtain \eqref{invariant}. 
That is, Bloch transform of gradient field is zero. Therefore the kernel of the Bloch transform $B^\epsilon_{m,\hat{\eta}} : L^2(\mathbb{R}^d)^d \mapsto L^2(\mathbb{R}^d)$ contains the closed subspace $\{\nabla \psi  : \psi\in H^1(\mathbb{R}^d)\}$ for each $m=1,\ldots,d-1$. Roughly speaking since Bloch waves satisfy incompressibility condition the Bloch transform on gradient field vanish. Thus we may anticipate that the pressure effects may not be captured in the Bloch method. This impression is not correct. Indeed, as shown Section \ref{sec5}, by means of localization via a cut-off function, we manage to keep the pressure term.
\hfill\qed\end{remark}
\noindent
Our  next result is concerned with the asymptotic behavior of these Bloch transforms as $\epsilon \rightarrow 0$. 
Since $\phi_{m,\hat{\eta}} (y; 0)$ is a fixed unit vector $(=\phi^0_{m,\hat{\eta}})$ orthogonal 
to $\hat{\eta}$ and independent of $y$ (see Theorem \ref{thm2.1}), we have  
\begin{theorem} \label{thm2.2}
Let  $g^\epsilon$ be  a sequence in $(L^2(\mathbb{R}^d))^d$ such that its support is contained  in a fixed  
compact set $K \subset \mathbb{R}^d,$ independent  of $\epsilon.$ If $g^\epsilon$ converges   weakly to 
$g$ in $(L^2 (\mathbb{R}^d))^d, $ then we have  
\begin{equation}\label{weak}
  \chi_{\epsilon^{-1}  \mathbb{T}^d}  (\xi) B^\epsilon_{m,\hat{\eta}} g^\epsilon (\xi)  
\rightharpoonup  \phi_{m,\hat{\eta}}^0 \cdot \widehat{g} (\xi), \ \  \mbox{ weakly  in } L^2_{loc} (\mathbb{R}^d_\xi) 
\mbox{ for } 1 \leq  m \leq  d-1
\end{equation}
where $\hat{g}$ denotes the Fourier transform of $g$ and we recall that $\hat{\eta}=\frac{\xi}{|\xi|}$. 
\end{theorem}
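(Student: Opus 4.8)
The plan is to pass to the limit directly in the integral defining $B^\epsilon_{m,\hat\eta}g^\epsilon(\xi)$, using the analytic dependence of the Rellich branch $\phi_{m,\hat\eta}(\cdot;\delta)$ on $\delta$ granted by Theorem~\ref{thm2.1} together with the fact that $\phi_{m,\hat\eta}(\cdot;0)=\phi^0_{m,\hat\eta}$ is a \emph{constant} unit vector orthogonal to $\hat\eta$. Fix a compact set $L\subset\mathbb{R}^d_\xi$. For $\epsilon$ small enough that $\epsilon|\xi|\le\delta_0$ on $L$, we have $\delta=\delta(\epsilon,\xi)=\epsilon|\xi|\to 0$ uniformly on $L$, and $\chi_{\epsilon^{-1}\mathbb{T}^d}(\xi)=1$ on $L$. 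First I would write the decomposition
\begin{equation*}
B^\epsilon_{m,\hat\eta}g^\epsilon(\xi)=\int_{\mathbb{R}^d}g^\epsilon(x)\cdot\overline{\phi^0_{m,\hat\eta}}\,e^{-ix\cdot\xi}\,dx+\int_{\mathbb{R}^d}g^\epsilon(x)\cdot\Big(\overline{\phi_{m,\hat\eta}\big(\tfrac{x}{\epsilon};\delta\big)}-\overline{\phi^0_{m,\hat\eta}}\Big)e^{-ix\cdot\xi}\,dx.
\end{equation*}
The first term equals $\phi^0_{m,\hat\eta}\cdot\widehat{g^\epsilon}(\xi)$, and since $g^\epsilon\rightharpoonup g$ weakly in $(L^2(\mathbb{R}^d))^d$ with uniformly compact support $K$, the Fourier transforms $\widehat{g^\epsilon}$ converge to $\widehat g$ (say, weakly in $L^2_{loc}$, or even uniformly on compacta by testing against the smooth functions $e^{-ix\cdot\xi}\mathbf 1_K$); hence the first term tends to $\phi^0_{m,\hat\eta}\cdot\widehat g(\xi)$ in $L^2_{loc}(\mathbb{R}^d_\xi)$. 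So the heart of the matter is showing the second ``corrector'' term goes to zero.

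For the corrector term, I would use analyticity of $\delta\mapsto\phi_{m,\hat\eta}(\cdot;\delta)$ with values in $(H^1(\mathbb{T}^d))^d$ to get, for $|\delta|\le\delta_0$, an estimate
$$\big\|\phi_{m,\hat\eta}(\cdot;\delta)-\phi^0_{m,\hat\eta}\big\|_{(H^1(\mathbb{T}^d))^d}\le C\,|\delta|,$$
and in particular, by Sobolev embedding or simply the $L^2(\mathbb{T}^d)$ bound, $\|\phi_{m,\hat\eta}(\cdot;\delta)-\phi^0_{m,\hat\eta}\|_{(L^2(\mathbb{T}^d))^d}\le C|\delta|=C\epsilon|\xi|$. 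Since $\hat\eta=\xi/|\xi|$ also varies with $\xi$, I would note that the map $\hat\eta\mapsto\phi^0_{m,\hat\eta}$ and the branch depend continuously (indeed the construction in \cite{ACFO} is uniform for $\hat\eta$ in $\mathbb{S}^{d-1}$), so $\delta_0$ and $C$ can be chosen uniform in $\hat\eta$. Writing $\psi^\epsilon_\xi(y):=\phi_{m,\hat\eta}(y;\epsilon|\xi|)-\phi^0_{m,\hat\eta}$, a periodic function of $y$ with $L^2(\mathbb{T}^d)$-norm $O(\epsilon|\xi|)$, the corrector term is $\int_K g^\epsilon(x)\cdot\overline{\psi^\epsilon_\xi(x/\epsilon)}\,e^{-ix\cdot\xi}\,dx$; by Cauchy--Schwarz its modulus is bounded by $\|g^\epsilon\|_{L^2(K)}\,\|\psi^\epsilon_\xi(\cdot/\epsilon)\|_{L^2(K)}$. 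The second factor is controlled via the standard fact that for a $1$-periodic $h\in L^2(\mathbb{T}^d)$ one has $\|h(\cdot/\epsilon)\|_{L^2(K)}\le C_K\|h\|_{L^2(\mathbb{T}^d)}$ (for $\epsilon\le 1$), hence $\|\psi^\epsilon_\xi(\cdot/\epsilon)\|_{L^2(K)}\le C_K\cdot C\epsilon|\xi|$. Since $\|g^\epsilon\|_{L^2}$ is bounded, the corrector term is $O(\epsilon|\xi|)$ pointwise in $\xi$, uniformly on the compact set $L$; in particular it tends to $0$ in $L^\infty(L)$ and a fortiori in $L^2(L)$. Combining the two terms gives the claimed weak (in fact strong $L^2_{loc}$, for the corrector part) convergence to $\phi^0_{m,\hat\eta}\cdot\widehat g(\xi)$.

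The main obstacle I anticipate is the genuine $\xi$-dependence of the Bloch data through $\hat\eta=\xi/|\xi|$, which is singular at $\xi=0$: one must be sure that the constants $\delta_0$, $C$ in the analytic perturbation bounds are uniform over $\hat\eta\in\mathbb{S}^{d-1}$, and one must handle (or simply excise) a neighbourhood of $\xi=0$ when proving measurability and the $L^2_{loc}$ estimate for $\psi^\epsilon_\xi(\cdot/\epsilon)$. The uniformity over $\hat\eta$ should follow from the compactness of $\mathbb{S}^{d-1}$ and the fact that the unperturbed eigenvalue $\nu(\hat\eta)=0$ is isolated with $\hat\eta$-independent gap and $(d-1)$-dimensional eigenspace; a small neighbourhood of $\xi=0$ contributes measure zero in the limit, or can be absorbed since on it $|\xi|\le\epsilon^{-1}$ anyway and the whole integrand stays $L^2$-bounded. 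A secondary technical point is justifying the convergence $\widehat{g^\epsilon}(\xi)\to\widehat g(\xi)$ in the right topology and checking measurability in $\xi$ of the various integrals, which is routine given the uniform compact support $K$ and the uniform bounds just described.
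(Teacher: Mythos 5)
Your proposal is correct and follows essentially the same route as the paper: the identical decomposition of $B^\epsilon_{m,\hat\eta}g^\epsilon$ into the term $\phi^0_{m,\hat\eta}\cdot\widehat{g^\epsilon}(\xi)$ plus a corrector, with the corrector killed by Cauchy--Schwarz and the smallness of $\phi_{m,\hat\eta}(\cdot;\delta)-\phi^0_{m,\hat\eta}$ in $(L^2(\mathbb{T}^d))^d$ as $\delta=\epsilon|\xi|\to0$. The only (harmless) variation is that you invoke the quantitative $O(|\delta|)$ bound from analyticity to get uniform convergence on compacta, where the paper uses mere directional continuity of the Rellich branch together with the Dominated Convergence Theorem.
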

\begin{proof}
Let us remark  that  $B^\epsilon_{m,\hat{\eta}} g^\epsilon (\xi)$ is defined for  $\epsilon \leq  \frac{\delta_0}{M} $ if  
$|\xi| \leq  M. $  We can write  
$$
B^\epsilon_{m,\hat{\eta}}  g^\epsilon (\xi)  = \chi_{\epsilon^{-1}\mathbb{T}^d}  (\xi) \phi_{m,\hat{\eta}}^0 \cdot g^\epsilon (\xi)  + 
\int_K  g^\epsilon  (x) \cdot
  \left(\overline{\phi_{m,\hat{\eta}} \left(\frac{x}{\epsilon}; \delta\right) } -  \overline{\phi_{m,\hat{\eta}}  
\left(\frac{x}{\epsilon}; 0\right)} \right)  e^{-i x \cdot \xi}  dx.
$$
By using Cauchy-Schwarz, the second term on the above right hand side can be estimated by the quantity 
$$
C_K \|\phi_{m,\hat{\eta}} (y; \delta)  -  \phi_{m,\hat{\eta}}  (y; 0) \|_{(L^2 (Y))^d}
$$ 
where $C_K$ is a constant depending on $K$ but not on $\epsilon$. Recall that $\delta$ is a 
function of $(\epsilon, \xi),$ namely $\delta =\epsilon |\xi|$. This quantity is easily seen 
to converge to zero as $\epsilon \rightarrow 0$ for each fixed $\xi$ because of the directional 
continuity of $\phi_{m,\hat{\eta}}(.,\delta) \mapsto \phi^0_{m,\hat{\eta}}$ in $(L^2(\mathbb{T}^d))^d$ as $\delta\to0$. 
We merely use the continuity of the $m^{th}$ Rellich  branch at $\delta=0$ with values in 
$(L^2 (\mathbb{T}^d))^d$. 
On the other hand, thanks to our normalization, the integral on $K$ is bounded by 
a constant independent of $(\epsilon, \xi).$ The proof is completed by a simple application of the 
Dominated Convergence Theorem which guarantees that the second term on the above right hand side 
converges strongly to $0$ in $L^2_{loc} (\mathbb{R}^d_\xi)$ as $\epsilon\to0$.  \hfill \end{proof} 
\noindent
Since compactly supported elements are dense in $(L^2(\mathbb{R}^d))^d$, we have the following :
\begin{corollary}\label{corol}
In the setting of Theorem \ref{thm2.2},
if  $g^\epsilon$ be  a sequence in $(L^2(\mathbb{R}^d))^d$ such that its support is contained  in a fixed  
compact set $K \subset \mathbb{R}^d,$ independent  of $\epsilon$ and   $g^\epsilon \rightarrow g$ in $L^2(\mathbb{R}^d)^d$ then we have the following strong convergence 
\begin{equation}\label{strong} 
\chi_{\epsilon^{-1} \mathbb{T}^d}  B^\epsilon_{m,\hat{\eta}}g^\epsilon (\xi)   \rightarrow  \phi_{m,\hat{\eta}}^0  \cdot \widehat{g}, \ \  \mbox{ strongly in }  L^2_{loc} (\mathbb{R}^d_\xi) \quad \mbox{for }  1  \leq   m \leq   d-1. 
\end{equation}
\hfill\qed\end{corollary}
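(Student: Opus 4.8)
The plan is to reuse the decomposition already appearing in the proof of Theorem~\ref{thm2.2} and to combine it with the fact that strong convergence $g^\epsilon\to g$ in $(L^2(\mathbb{R}^d))^d$ transfers, via Plancherel's identity, to strong convergence of the Fourier transforms. Fix a compact set $L\subset\mathbb{R}^d_\xi$ and put $M=\sup_{\xi\in L}|\xi|$. For $\epsilon<\delta_0/M$ the support restriction $\epsilon|\xi|\le\delta_0$ is inactive on $L$, so $B^\epsilon_{m,\hat\eta}g^\epsilon(\xi)$ is given there by the integral formula \eqref{bloch-transformation1}, and for $\epsilon$ small enough one also has $L\subset\epsilon^{-1}\mathbb{T}^d$, so that $\chi_{\epsilon^{-1}\mathbb{T}^d}\equiv 1$ on $L$. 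Exactly as in the proof of Theorem~\ref{thm2.2} we then write, for $\xi\in L$,
\[
\chi_{\epsilon^{-1}\mathbb{T}^d}(\xi)\,B^\epsilon_{m,\hat\eta}g^\epsilon(\xi)=\phi^0_{m,\hat\eta}\cdot\widehat{g^\epsilon}(\xi)+R^\epsilon_m(\xi),\qquad R^\epsilon_m(\xi)=\int_K g^\epsilon(x)\cdot\Big(\overline{\phi_{m,\hat\eta}\big(\tfrac{x}{\epsilon};\delta\big)}-\phi^0_{m,\hat\eta}\Big)e^{-ix\cdot\xi}\,dx,
\]
with $\delta=\epsilon|\xi|$, and it suffices to handle the two terms on the right separately.

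For the first term, $g^\epsilon\to g$ strongly in $(L^2(\mathbb{R}^d))^d$ gives, by Plancherel, $\widehat{g^\epsilon}\to\widehat g$ strongly in $(L^2(\mathbb{R}^d))^d$; since $\phi^0_{m,\hat\eta}$ is a unit vector, contraction against it is a bounded operation, whence $\phi^0_{m,\hat\eta}\cdot\widehat{g^\epsilon}\to\phi^0_{m,\hat\eta}\cdot\widehat g$ strongly in $L^2(L)$. For the remainder, Cauchy--Schwarz together with the normalization $\int_Y|\phi_{m,\hat\eta}(y;\delta)|^2\,dy=1$ and $Y$-periodicity (which makes $\|\phi_{m,\hat\eta}(\cdot/\epsilon;\delta)\|_{(L^2(K))^d}$ and $\|\phi^0_{m,\hat\eta}\|_{(L^2(K))^d}$ bounded by a constant depending only on $K$) yields the pointwise estimate
\[
|R^\epsilon_m(\xi)|\le C_K\,\|g^\epsilon\|_{(L^2(\mathbb{R}^d))^d}\,\big\|\phi_{m,\hat\eta}(\cdot;\delta)-\phi^0_{m,\hat\eta}\big\|_{(L^2(\mathbb{T}^d))^d},\qquad\delta=\epsilon|\xi|\le\epsilon M.
\]
The factor $\|g^\epsilon\|_{(L^2(\mathbb{R}^d))^d}$ is bounded, the sequence being convergent, and by the directional continuity of the $m^{th}$ Rellich branch at $\delta=0$ the last factor tends to $0$ as $\epsilon\to 0$, for each fixed $\xi\in L$. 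Since the bound is uniform in $\xi\in L$, the Dominated Convergence Theorem gives $R^\epsilon_m\to 0$ strongly in $L^2(L)$. Adding the two contributions and letting $L$ exhaust $\mathbb{R}^d_\xi$ proves \eqref{strong}.

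This is also the situation hinted at by the remark preceding the statement: writing $T^\epsilon_m g:=\chi_{\epsilon^{-1}\mathbb{T}^d}B^\epsilon_{m,\hat\eta}g$ and $T^0_m g:=\phi^0_{m,\hat\eta}\cdot\widehat g$, the operators $T^\epsilon_m$ are bounded from $(L^2(K))^d$ to $L^2(L)$ uniformly in $\epsilon$ (again by Cauchy--Schwarz and the normalization), they converge strongly as $\epsilon\to 0$ on each fixed $g$ by the argument above, and hence converge for any strongly convergent sequence $g^\epsilon$ by the triangle inequality $\|T^\epsilon_m g^\epsilon-T^0_m g\|\le\|T^\epsilon_m(g^\epsilon-g)\|+\|T^\epsilon_m g-T^0_m g\|$ combined with the uniform bound on the first summand. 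I do not expect a genuine obstacle here: the only mild points to watch are the $\xi$-dependence of the parameter $\delta=\epsilon|\xi|$ (handled by working on a fixed compact $L$, where $\delta\le\epsilon M$) and the fact that both the cut-off $\chi_{\epsilon^{-1}\mathbb{T}^d}$ and the support constraint $\epsilon|\xi|\le\delta_0$ become trivial on $L$ once $\epsilon$ is small enough; the substantive ingredient, namely the strong decay of the remainder $R^\epsilon_m$, has already been obtained in the proof of Theorem~\ref{thm2.2}.
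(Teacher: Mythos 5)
Your proof is correct and follows essentially the same route the paper intends: the paper leaves the corollary without a written proof (only the remark about density preceding it), but the argument is exactly the one you give, namely reusing the decomposition from the proof of Theorem \ref{thm2.2}, noting that the remainder term was already shown there to converge strongly to zero in $L^2_{loc}(\mathbb{R}^d_\xi)$, and upgrading the main term from weak to strong convergence via Plancherel once $g^\epsilon\to g$ strongly. Your handling of the $\xi$-dependence of $\delta=\epsilon|\xi|$ on a fixed compact $L$ and the uniform Cauchy--Schwarz bound are exactly the ingredients the paper uses for Theorem \ref{thm2.2}, so there is nothing to add.
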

\noindent
We recall the classical orthogonal decomposition :
\begin{equation}
 L^2(\mathbb{R}^d)^d = \{\nabla \psi : \psi\in H^1(\mathbb{R}^d)\} \oplus \{ \phi \in L^2(\mathbb{R}^d)^d : \nabla\cdot \phi = 0 \}.
\end{equation}
Let us denote 
\begin{equation}\label{X}
X =  \{\nabla \psi : \psi\in H^1(\mathbb{R}^d)\}, \mbox{ so that, } X^{\perp} = \{ \phi \in L^2(\mathbb{R}^d)^d : \nabla\cdot \phi = 0 \} .
\end{equation}
By our choice, $\{\phi^0_{1,\hat{\eta}},\ldots,\phi^0_{d-1,\hat{\eta}},\hat{\eta}\}$ forms an orthonormal basis in $\mathbb{R}^d$, and so we can deduce the following : 
\begin{proposition}\label{proposition2.1}
If $g\in X^{\perp}$ and $\phi^0_{m,\hat{\eta}}\cdot \widehat{g} = 0 $ for all $ m=1,\ldots, d-1$, then  $g=0$.
\end{proposition}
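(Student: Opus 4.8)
The plan is to move to the Fourier side and combine the incompressibility of $g$ with the orthonormal basis property of $\{\phi^0_{1,\hat{\eta}},\ldots,\phi^0_{d-1,\hat{\eta}},\hat{\eta}\}$. First I would record that $g\in X^{\perp}$ means $\nabla\cdot g=0$ in the sense of distributions, which on the Fourier side reads $\xi\cdot\widehat{g}(\xi)=0$ for a.e.\ $\xi\in\mathbb{R}^d$; equivalently, for a.e.\ $\xi\neq 0$ the vector $\widehat{g}(\xi)\in\mathbb{C}^d$ is orthogonal to $\hat{\eta}=\xi/|\xi|$ in the bilinear pairing $u\cdot v=\sum_j u_j v_j$.

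Next, the hypothesis gives $\phi^0_{m,\hat{\eta}}\cdot\widehat{g}(\xi)=0$ for every $m=1,\ldots,d-1$ and a.e.\ $\xi$. Combining the two facts, for a.e.\ $\xi\neq 0$ the vector $\widehat{g}(\xi)$ is annihilated by each of the $d$ vectors $\phi^0_{1,\hat{\eta}},\ldots,\phi^0_{d-1,\hat{\eta}},\hat{\eta}$, which by construction form an orthonormal basis of $\mathbb{R}^d$. Since these basis vectors are real, each relation $\phi^0_{m,\hat{\eta}}\cdot\widehat{g}(\xi)=0$ and $\hat{\eta}\cdot\widehat{g}(\xi)=0$ splits into two identical conditions, one on $\operatorname{Re}\widehat{g}(\xi)$ and one on $\operatorname{Im}\widehat{g}(\xi)$: each of these real vectors of $\mathbb{R}^d$ is then orthogonal to a basis of $\mathbb{R}^d$, hence is zero. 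Therefore $\widehat{g}(\xi)=0$ for a.e.\ $\xi$, and by injectivity of the Fourier transform (Plancherel) on $L^2(\mathbb{R}^d)^d$ we conclude $g=0$.

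There is essentially no hard step here: the whole content is the translation of the incompressibility constraint into $\xi\cdot\widehat{g}=0$, after which the result is pure linear algebra applied pointwise in $\xi$. The only point deserving a word of care is the interplay between the complex-valued $\widehat{g}$ and the real orthonormal frame $\{\phi^0_{m,\hat{\eta}}\}\cup\{\hat{\eta}\}$; once one observes that a real orthonormal basis of $\mathbb{R}^d$ is automatically a $\mathbb{C}$-basis of $\mathbb{C}^d$, the argument above is immediate. In particular, no measurability of the map $\hat{\eta}\mapsto\phi^0_{m,\hat{\eta}}$ is needed, since the vanishing of $\widehat{g}(\xi)$ is established separately for almost every fixed $\xi$.
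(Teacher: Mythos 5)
Your proof is correct and follows essentially the same route as the paper: both arguments reduce to pointwise linear algebra in $\xi$ using the orthonormal frame $\{\phi^0_{1,\hat{\eta}},\ldots,\phi^0_{d-1,\hat{\eta}},\hat{\eta}\}$, the only cosmetic difference being that you invoke the divergence-free condition as $\xi\cdot\widehat{g}(\xi)=0$ up front, whereas the paper first concludes $\widehat{g}(\xi)=c(\xi)\xi$ from the $d-1$ hypotheses and then uses $g\in X^{\perp}$ to force $c=0$. Your remark about the real frame being a $\mathbb{C}$-basis of $\mathbb{C}^d$ is a welcome extra precision.
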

\begin{proof}
The proof is immediate, as $\{\phi^0_{1,\hat{\eta}},\ldots,\phi^0_{d-1,\hat{\eta}}\}$ forms an orthogonal basis in $\mathbb{R}^{d-1}$ and $\phi^0_{m,\hat{\eta}}\cdot \widehat{g} = 0 $ for all $ m=1,\ldots, d-1$, so $\widehat{g}(\xi) = c(\xi)\xi$ for some scalar $c\in L^2(\mathbb{R}^d)$. Now if $c\neq 0$, it contradicts the hypothesis $g\in X^{\perp}$. Thus $c=0$. Consequently, $g= 0$.
\hfill\end{proof}
\begin{corollary}\label{corol2}
In the setting of Theorem \ref{thm2.2} and Proposition \ref{proposition2.1}
if  $g^\epsilon$ be  a sequence in $X^{\perp} \subset L^2(\mathbb{R}^d)^d$ such that its support is contained  in a fixed compact set $K \subset \mathbb{R}^d,$ independent  of $\epsilon$ and  $g^\epsilon \rightharpoonup g$ in $L^2(\mathbb{R}^d)^d$ weak, and $\phi^0_{m,\hat{\eta}}\cdot \widehat{g} = 0 $ for all $ m=1,\ldots, d-1$, then  $g=0$. 
\end{corollary}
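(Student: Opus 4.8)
The plan is to combine two facts: that $X^{\perp}$ is weakly closed in $L^2(\mathbb{R}^d)^d$, so that the weak limit $g$ automatically lies in $X^{\perp}$, and then Proposition \ref{proposition2.1}. Indeed, by the decomposition \eqref{X} we have $X^{\perp}=\{\phi\in L^2(\mathbb{R}^d)^d:\nabla\cdot\phi=0\}$, which is the orthogonal complement of the closed subspace $X$; equivalently it is the intersection of the kernels of the bounded linear functionals $\phi\mapsto\int_{\mathbb{R}^d}\phi\cdot\nabla\psi$, $\psi\in H^1(\mathbb{R}^d)$. Each such functional is weakly continuous, so $X^{\perp}$ is weakly sequentially closed (this is also immediate from Mazur's theorem, a closed convex set being weakly closed). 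Hence from $g^{\epsilon}\in X^{\perp}$ and $g^{\epsilon}\rightharpoonup g$ we get $g\in X^{\perp}$.

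It then remains only to feed $g$ into Proposition \ref{proposition2.1}. We have just shown $g\in X^{\perp}$, and by assumption $\phi^0_{m,\hat{\eta}}\cdot\widehat{g}=0$ for all $m=1,\dots,d-1$; these are precisely the two hypotheses of Proposition \ref{proposition2.1}, whose conclusion is $g=0$. One may, if desired, also record that $g$ inherits the support condition $\mathrm{supp}\,g\subset K$, since $\langle g,\varphi\rangle=\lim_{\epsilon}\langle g^{\epsilon},\varphi\rangle=0$ for every $\varphi\in C_c^{\infty}(\mathbb{R}^d\setminus K)$, but this is not needed for the conclusion.

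There is essentially no obstacle here: the only step deserving a word is the weak closedness of $X^{\perp}$, and that is a standard Hilbert-space fact. I note that one could alternatively route the argument through Theorem \ref{thm2.2}, observing that $\chi_{\epsilon^{-1}\mathbb{T}^d}B^{\epsilon}_{m,\hat{\eta}}g^{\epsilon}\rightharpoonup\phi^0_{m,\hat{\eta}}\cdot\widehat{g}$ weakly in $L^2_{loc}(\mathbb{R}^d_{\xi})$ while the hypothesis forces the right-hand side to vanish; but this detour still requires Proposition \ref{proposition2.1} to pass from the vanishing of the Bloch data to $g=0$, so the direct weak-closedness argument above is the cleanest.

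\hfill\qed
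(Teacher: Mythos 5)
Your argument is correct and coincides with the paper's own proof: both establish that $X^{\perp}$, being a closed subspace of $L^2(\mathbb{R}^d)^d$, is weakly closed, so the weak limit $g$ lies in $X^{\perp}$, and then conclude by Proposition \ref{proposition2.1}. Your write-up merely spells out the weak-closedness step in more detail.
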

\begin{proof}
 The proof simply follows as $X^{\perp}$ is a closed subspace of $L^2(\mathbb{R}^d)^d$, so the limit $g\in X^{\perp}$ and the result follows by applying Proposition \ref{proposition2.1}. 
\hfill\end{proof}
\begin{remark}
 Bloch waves being incompressible are transversal. Longitudinal direction is missing and it has to be added to get the full basis. Naturally, asymptotics of the Bloch transform contains information of the Fourier transform only in transversal directions. It contains no information in the longitudinal direction. Because of this features, in the homogenization limit also, there is no information in the longitudinal direction. This is however proved to be enough to complete the homogenization process because the limiting velocity field is incompressible. See Section \ref{sec5}.   
\hfill\qed\end{remark}

\section{Computation of derivatives}\label{sec3}
\setcounter{equation}{0}
In this section, we give the expressions of the derivatives (at $\delta=0$) of the Rellich branches 
$\{\phi_{m,\hat{\eta}}(y;\delta),q_{m,\hat{\eta}}(y;\delta), q_{0,m,\hat{\eta}}(\delta),\lambda_{m,\hat{\eta}}(\delta)\}$ obtained in Theorem \ref{thm2.1}. These results are essentially borrowed from \cite{ACFO} except for the second order derivative of $q_{0,m,\hat{\eta}}(\delta)$ which is new.
We differentiate, with respect to $\delta\in\mathbb{R}$, (\ref{eq2.2}) or equivalently the 
following system, fixing $m=1,\ldots,d-1$ and $\hat{\eta} =\frac{\xi}{|\xi|}\in \mathbb{S}^{d-1}$, 
\begin{equation} \left. \begin{array}{rllllll}
\displaystyle{ -D (\delta \hat{\eta}) \cdot  (\mu(y)   D (\delta \hat{\eta} )\phi_{m,\hat{\eta}}(y;\delta))  +  D  (\delta \hat{\eta} ) q_{m,\hat{\eta}}(y;\delta)  +  q_{0,m,\hat{\eta}} (\delta)  \hat{\eta}  } &=& \displaystyle{  \lambda_{m,\hat{\eta}}(\delta)  \phi_{m,\hat{\eta}}(y;\delta)  \mbox{   in } \mathbb{T}^d ,} \\[2mm]
\displaystyle{  D (\delta \hat{\eta}) \cdot \phi_{m,
\hat{\eta}}(y;\delta) } &=&  0  \mbox{  in }  \mathbb{T}^d , \\  
\displaystyle{ \hat{\eta} \cdot \int\limits_{\mathbb{T}^d}  \phi_{m,\hat{\eta}}(y;\delta)  dy  }&=& 0\\[2mm]
 (\phi_{m,\hat{\eta}}, q_{m,\hat{\eta}})  \mbox{ is } Y &-& \mbox{ periodic.}
%\displaystyle{ \int\limits_{\mathbb{T}^d} |\phi_{m,\hat{\eta}}(y;\delta)|^2  dy } &=& 1 .
\end{array} \right\} \label{eq3.1}
\end{equation}
\textbf{Zeroth order derivatives : }
For $m = 1,  \ldots ,d-1$ and for a fixed direction $\hat{\eta} \in \mathbb{S}^{d -1}$ we have 
$ \lambda_{m,\hat{\eta}} (0) =0$ and a corresponding eigenfunction is such that $q_{m,\hat{\eta}}(y;0) =0, 
q_{0, m,\hat{\eta}}(0) =0$ and  $\phi_{m,\hat{\eta}}(y;0)  $ is a constant unit vector of $\mathbb{R}^d$ orthogonal to 
$\hat{\eta}$. We give a notation for this constant $\phi_{m,\hat{\eta}}(y;0) = \phi_{m,\hat{\eta}}^0$. We recall that $\{ \phi^0_{1,\hat{\eta}},\ldots, \phi^0_{d-1,\hat{\eta}},\hat{\eta}\}$ is such that they form an orthonormal basis for $\mathbb{R}^d$.
\paragraph{First order derivatives : }
Let us differentiate \eqref{eq3.1} once with respect to $\delta$ to obtain (prime denotes derivatives with respect to $\delta$) :
\begin{equation} \label{q-prime}\left. \begin{array}{rllllll}
\displaystyle{ -D (\delta \hat{\eta}) \cdot  (\mu(y)   D (\delta \hat{\eta} )\phi^\prime_{m,\hat{\eta}}(y;\delta))  +  D  (\delta \hat{\eta} ) q^\prime_{m,\hat{\eta}}(y;\delta)  +  q^\prime_{0,m,\hat{\eta}} (\delta)  \hat{\eta} -\lambda_{m,\hat{\eta}}(\delta)\phi^\prime_{m,\hat{\eta}}(y;\delta) }& &\\[2mm]
= \displaystyle{ f(\delta) \mbox{   in } \mathbb{T}^d ,}& & \\[2mm]
\displaystyle{  D (\delta \hat{\eta}) \cdot \phi^\prime_{m,
\hat{\eta}}(y;\delta) } =  g(\delta)  \mbox{  in }  \mathbb{T}^d, & & \\[2mm]  
\displaystyle{ \hat{\eta} \cdot \int\limits_{\mathbb{T}^d}  \phi^\prime_{m,\hat{\eta}}(y;\delta)  dy  }= 0, & &\\[2mm]
(\phi^\prime_{m,\hat{\eta}}, q^\prime_{m,\hat{\eta}})  \mbox{ is } Y - \mbox{ periodic}& &
%\displaystyle{ \int\limits_{\mathbb{T}^d}  \phi_{m,\hat{\eta}}(y;\delta) \cdot \phi^\prime_{m,\hat{\eta}}(y;\delta)  dy  }= 0, & &
\end{array} \right\} 
\end{equation} 
where,
\begin{equation}
\begin{aligned}
 f(\delta) &= \lambda^\prime_m(\delta)\phi_{m,\hat{\eta}}(y;\delta)- iq_{m,\hat{\eta}}(y;\delta)\hat{\eta} + i\hat{\eta}\cdot \mu(y) D(\delta\hat{\eta})\phi_{m,\hat{\eta}}(y;\delta) + iD(\delta \hat{\eta})\cdot (\mu(y)\phi_{m,\hat{\eta}}(y;\delta)\otimes \hat{\eta}),\\[2mm]                
          g(\delta) &= -i\hat{\eta}\cdot \phi_{m,\hat{\eta}}(y;\delta).
\end{aligned}
\end{equation}
We put  $\delta =0$ in \eqref{q-prime} and by integrating over $\mathbb{T}^d$, we obtain 
\begin{equation*}
 q^\prime_{0,m,\hat{\eta}}(0)\hat{\eta} = \lambda^\prime_{m,\hat{\eta}}(0)\phi^0_{m,\hat{\eta}}.
\end{equation*}
Taking scalar product with $\hat{\eta}$, we simply get
 $\lambda^\prime_{m,\hat{\eta}}(0) =q^\prime_{0,m,\hat{\eta}}(0)=0$ as $\hat{\eta}\perp \phi^0_{m,\hat{\eta}}$.\\
 \\
Using the above information in \eqref{q-prime}, we find that $(\phi^\prime_{m,\hat{\eta}}(y;0),q^\prime_{m,\hat{\eta}}(y;0))$ is a solution of the following cell problem :
\begin{equation} \label{cell-equation}\left. \begin{array}{rllllll}
\displaystyle{ -\nabla \cdot  (\mu(y)   \nabla \phi^\prime_{m,\hat{\eta}}(y;0))  +  \nabla q^\prime_{m,\hat{\eta}}(y;0) }
&=& \displaystyle{ i\nabla\cdot (\mu(y)\phi^0_{m,\hat{\eta}}\otimes \hat{\eta})  \mbox{   in } \mathbb{T}^d ,} \\[2mm]
\displaystyle{  \nabla \cdot \phi^\prime_{m,
\hat{\eta}}(y;0) } &=&  0  \mbox{  in }  \mathbb{T}^d, \\[2mm]  
\displaystyle{ \hat{\eta} \cdot \int\limits_{\mathbb{T}^d}  \phi^\prime_{m,\hat{\eta}}(y;0)  dy  } &=& 0,\\[2mm] 
\displaystyle{ \int\limits_{\mathbb{T}^d}  q^\prime_{m,\hat{\eta}}(y;0)  dy  } &=& 0\\[2mm]
(\phi^\prime_{m,\hat{\eta}}(y;0), q^\prime_{m,\hat{\eta}}(y;0))  \mbox{ is } Y &-& \mbox{ periodic.}
\end{array} \right\} 
\end{equation} 
Comparing this with \eqref{eq8}, it can be seen that that  $\phi^\prime_{m,\hat{\eta}}(y;0)$ is given by (see \cite{ACFO}) :  
\begin{equation}
\phi_{m,\hat{\eta}}^\prime (y; 0)  = i \hat{\eta}_\alpha  \chi^{r}_\alpha (y) (\phi_{m,\hat{\eta}}^0)_r + \zeta_{m,\hat{\eta}}
\end{equation}
where $\zeta_{m,\hat{\eta}}\in\mathbb{C}^d$ is a constant vector (independent of $y$), orthogonal to $\hat{\eta}$. 
In other words, the $y$-dependence of $\phi_{m,\hat{\eta}}^\prime (y;0)$ is completely determined by the cell test function 
$\chi^r_\alpha(y)$, solution of problem \eqref{eq8}. \\
\\
%Now as $\chi^r_\alpha\in L^2_0(\mathbb{T}^d)$, so the constant $\zeta_{m,\hat{\eta}} =0$ in \eqref{phi-prime-0}, i.e. 
%$$ \phi_{m,\hat{\eta}}^\prime (y; 0)  = i \hat{\eta}_\alpha  \chi^{r}_\alpha (y) (\phi_{m,\hat{\eta}}^0)_r .$$ 
In a similar manner, the  derivative of the eigenpressure $q_{m,\hat{\eta}}(y;0)$ is given by (see \cite{ACFO}):  
\begin{equation}\label{q-prime-0}
q_{m,\hat{\eta}}^\prime (y; 0) = i \hat{\eta}_\alpha \pi^{r}_\alpha (y) (\phi_{m,\hat{\eta}}^0)_r  \, ,
\end{equation}
%where $c_{m,\hat{\eta}}\in\mathbb{C}$ is a constant, independent of $y$. 
That is, the $y$-dependence of $q_{m,\hat{\eta}}^\prime (y;0)$ is completely determined by the cell test function $\pi^r_\alpha(y)$ , solution of problem \eqref{eq8}. 
\paragraph{Second order derivatives : }

Next we differentiate \eqref{q-prime} with respect to $\delta$ to obtain :
\begin{equation} \left. \begin{array}{rllllll}
\displaystyle{ -D (\delta \hat{\eta}) \cdot  (\mu(y)   D (\delta \hat{\eta} )\phi^{\prime\prime}_{m,\hat{\eta}}(y;\delta))  +  D  (\delta \hat{\eta} ) q^{\prime\prime}_{m,\hat{\eta}}(y;\delta)  +  q_{0,m,\hat{\eta}}^{\prime\prime}(\delta)  \hat{\eta} -\lambda_{m,\hat{\eta}}(\delta)\phi^{\prime\prime}_{m,\hat{\eta}}(y;\delta) }& &\\[2mm]
= \displaystyle{  F(\delta)  \mbox{   in } \mathbb{T}^d ,}& & \\[2mm]
\displaystyle{  D (\delta \hat{\eta}) \cdot \phi^{\prime\prime}_{m,\hat{\eta}}(y;\delta) } =  G(\delta)  \mbox{  in }  \mathbb{T}^d , & & \\[2mm]  
\displaystyle{ \hat{\eta} \cdot \int\limits_{\mathbb{T}^d}   \phi^{\prime\prime}_{m,\hat{\eta}}(y;\delta)  dy  } = 0, & &  \\[2mm]
(\phi^{\prime\prime}_{m,\hat{\eta}}, q^{\prime\prime}_{m,\hat{\eta}})  \mbox{ is } Y - \mbox{ periodic}& &
%\displaystyle{ \int\limits_{\mathbb{T}^d} \lb\phi^\prime_{m,\hat{\eta}}(y;\delta)\cdot \phi^\prime_{m,\hat{\eta}}(y;\delta) +  \phi_{m,\hat{\eta}}(y;\delta)\cdot\phi^{\prime\prime}_{m,\hat{\eta}}(y;\delta) \rb dy  }= 0,
\end{array} \right\} \label{q-double-prime} 
\end{equation}
where 
\begin{equation}\begin{aligned}
 F(\delta)= -2\mu(y)\phi_{m,\hat{\eta}}(y;\delta) &+ 2i\hat{\eta} \cdot\mu(y)D(\delta\hat{\eta})\phi^{\prime}_{m,\hat{\eta}}(y;\delta) + 2i D(\delta \hat{\eta})\cdot (\mu(y)\phi^{\prime}_{m,\hat{\eta}}(y;\delta)\otimes \hat{\eta})\\[2mm]
 &- 2i\hat{\eta}q^\prime_{m,\hat{\eta}}(y;\delta)+\lambda^{\prime\prime}_{m,\hat{\eta}}(\delta)\phi_{m,\hat{\eta}}(y;\delta) + 2\lambda_{m,\hat{\eta}}^\prime(\delta)\phi^\prime_{m,\hat{\eta}}(y;\delta),\\[2mm]
G(\delta) = -2i\hat{\eta}\cdot\phi^\prime_{m,\hat{\eta}}(y;\delta).\\[1mm]
 \end{aligned}\end{equation}
We consider \eqref{q-double-prime} at $\delta=0$ and by integrating over $\mathbb{T}^d$, we get 
\begin{align*}
 q^{\prime\prime}_{0,m,\hat{\eta}}(0)\hat{\eta}_k  &= -\frac{2}{|\mathbb{T}^d|}\int_{\mathbb{T}^d}\mu(y)(\phi_{m,\hat{\eta}}^0)_k\ dy -\frac{2}{|\mathbb{T}^d|}\int_{\mathbb{T}^d} [\hat{\eta}_\beta\mu(y)\nabla_y \chi^l_\beta(y)
 (\phi^0_{m,\hat{\eta}})_l]_{k\alpha}\hat{\eta}_\alpha\ dy\\[2mm]
 &\quad + \lambda^{\prime\prime}_{m,\hat{\eta}}(0)(\phi^0_{m,\hat{\eta}})_k
\end{align*}
or,
\begin{align}\label{second-derivative}
-\frac{1}{2}&\lb q^{\prime\prime}_{0,m,\hat{\eta}}(0)\hat{\eta}_k - \lambda^{\prime\prime}_{m,\hat{\eta}}(0)(\phi^0_{m,\hat{\eta}})_k\rb = \frac{1}{|\mathbb{T}^d|}\int_{\mathbb{T}^d}\mu(y)\left[\delta_{lk}\delta_{\alpha\beta}+ (\nabla\chi^l_\beta)_{k\alpha}  \right]dy\ \hat{\eta}_\alpha\hat{\eta}_\beta(\phi_{m,\hat{\eta}}^0)_l \notag \\[2mm]
 &\qquad\qquad\quad= \frac{1}{|\mathbb{T}^d|}\int_{\mathbb{T}^d}\mu(y)\left[\nabla (y_\beta e_l ) : \nabla( y_\alpha e_k) + \nabla\chi^l_\beta  : \nabla( y_\alpha e_k)\right] dy\ \hat{\eta}_\alpha\hat{\eta}_\beta(\phi_{m,\hat{\eta}}^0)_l \notag\\[2mm]
&\qquad\qquad\quad= (A^{*})^{kl}_{\alpha\beta} \hat{\eta}_\alpha\hat{\eta}_\beta(\phi_{m,\hat{\eta}}^0)_l.\notag \\[2mm]
&\qquad\qquad\quad= [(\phi_{m,\hat{\eta}}^0)^t M(\hat{\eta}, A^{\ast})]_k = [M(\hat{\eta}, A^{\ast})(\phi_{m,\hat{\eta}}^0)]_k
\end{align}
where $M (\hat{\eta}, A^\ast)$ is the symmetric matrix whose entries are given by 
$$ 
M (\hat{\eta}, A^\ast)_{kl} = (A^{*})^{kl}_{\alpha \beta}  \hat{\eta}_\alpha  \hat{\eta}_\beta.
$$
This is nothing but a contraction of the homogenized tensor $A^{*}$.
As a simple consequence of \eqref{second-derivative}, we get
\begin{equation*}
 -\frac{1}{2} q^{\prime\prime}_{0,m,\hat{\eta}}(0) = M(\hat{\eta}, A^{\ast})\phi_{m,\hat{\eta}}^0\cdot \hat{\eta}\quad \mbox{ and }\quad
  \frac{1}{2}\lambda^{\prime\prime}_{m,\hat{\eta}}(0)= M(\hat{\eta}, A^{\ast})\phi_{m,\hat{\eta}}^0\cdot \phi_{m,\hat{\eta}}^0.
\end{equation*}
It is also follows that  $ M(\hat{\eta},A^{\ast})\phi_{m,\hat{\eta}}^0 \perp \phi^0_{m^\prime,\hat{\eta}}$ for all $m\neq m^\prime$.\\
\\
By summarizing the above computations, we have 
\begin{theorem}\label{thm3.1} 
For $m = 1,  \ldots ,d-1$ and for a fixed direction $\hat{\eta} \in \mathbb{S}^{d -1}$ we have 
\begin{enumerate}
\item[(i)]  $ \lambda_{m,\hat{\eta}} (0) =0$ and a corresponding eigenfunction is such that $q_{m,\hat{\eta}}(y;0) =0, 
q_{0, m,\hat{\eta}}(0) =0$ and  $\phi_{m,\hat{\eta}}(y;0) = \phi_{m,\hat{\eta}}^0$ a unit vector orthogonal to $\hat{\eta}$.

\item[(ii)]  $\lambda_{m,\hat{\eta}}^\prime (0)  =0 $ and $ q_{0,m,\hat{\eta}}^\prime(0) = 0$.

\item[(iii)]  The  derivative of the eigenfunction $\phi_{m,\hat{\eta}}(y;
{\delta})$ at $\delta =0$ satisfies:  
$$ 
\phi_{m,\hat{\eta}}^\prime (y; 0)  = i \hat{\eta}_\alpha  \chi^{r}_\alpha (y) (\phi_{m,\hat{\eta}}^0)_r + \zeta_{m,\hat{\eta}}
$$
where $\zeta_{m,\hat{\eta}}\in\mathbb{C}^d$ is a constant vector (independent of $y$), orthogonal to $\hat{\eta}$.

\item[(iv)] The  derivative of the eigenfunction $q_{m,\hat{\eta}}(y;{\delta})$ at $\delta =0$ satisfies:  
$$ 
q_{m,\hat{\eta}}^\prime (y; 0) = i \hat{\eta}_\alpha \pi^{r}_\alpha (y) (\phi_{m,\hat{\eta}}^0)_r  .
$$
\item[(v)]  The second derivative of the eigenvalue $\lambda_{m,\hat{\eta}}({\delta})$  and $q_{0,m,\hat{\eta}}(\delta)$ at $\delta =0$ 
satisfy the relation  
\begin{equation}\label{eq3.10}
\frac{1}{2} \lambda_{m,\hat{\eta}}^{\prime \prime} (0) \phi_{m,\hat{\eta}}^0 =\frac{1}{2} q^{\prime\prime}_{0,m,\hat{\eta}}(0)\hat{\eta} +  M(\hat{\eta},A^{\ast})\phi_{m,\hat{\eta}}^0
\end{equation}
where $M (\hat{\eta}, A^\ast)$ is the symmetric matrix whose entries are given by 
$$ 
M (\hat{\eta}, A^\ast)_{kl} = (A^{*})^{kl}_{\alpha \beta}  \hat{\eta}_\alpha  \hat{\eta}_\beta.
$$
\end{enumerate}
\end{theorem}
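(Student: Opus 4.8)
The plan is to prove each item by successively differentiating the modified Bloch spectral problem \eqref{eq2.2}, written in the equivalent form \eqref{eq3.1}, with respect to the real parameter $\delta$ along the fixed unit vector $\hat{\eta}$, and then evaluating at $\delta=0$. By Theorem~\ref{thm2.1} the Rellich branch $\delta\mapsto(\lambda_{m,\hat{\eta}}(\delta),\phi_{m,\hat{\eta}}(\cdot;\delta),q_{m,\hat{\eta}}(\cdot;\delta),q_{0,m,\hat{\eta}}(\delta))$ is real-analytic on $(-\delta_0,\delta_0)$, so all the derivatives below exist and the differentiated systems are genuine identities in $\mathbb{R}\times(H^1(\mathbb{T}^d))^d\times L^2_0(\mathbb{T}^d)\times\mathbb{C}$. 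Item~(i) is just Theorem~\ref{thm2.1}(i) together with the description of the $\nu(\hat{\eta})=0$ eigenspace of \eqref{eq2.4} as constant vectors orthogonal to $\hat{\eta}$. For item~(ii) I differentiate once to obtain \eqref{q-prime}, set $\delta=0$, and integrate the momentum equation over $\mathbb{T}^d$: every term carrying $D(0)=\nabla$ applied to a $Y$-periodic field integrates to zero, leaving $q'_{0,m,\hat{\eta}}(0)\hat{\eta}=\lambda'_{m,\hat{\eta}}(0)\phi^0_{m,\hat{\eta}}$, and taking the scalar product with $\hat{\eta}$ and with $\phi^0_{m,\hat{\eta}}$ (which are orthogonal) forces both scalars to vanish.

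With $\lambda'_{m,\hat{\eta}}(0)=q'_{0,m,\hat{\eta}}(0)=0$ in hand, \eqref{q-prime} at $\delta=0$ collapses exactly to the cell system \eqref{cell-equation}, whose right-hand side is $i\nabla\cdot(\mu\,\phi^0_{m,\hat{\eta}}\otimes\hat{\eta})$, a linear combination, with weights $i\hat{\eta}_\alpha(\phi^0_{m,\hat{\eta}})_r$, of the sources appearing in the cell problems \eqref{eq8}. Since the homogeneous velocity kernel of the cell operator consists only of constant vectors (with zero pressure), $\phi'_{m,\hat{\eta}}(y;0)$ equals $i\hat{\eta}_\alpha\chi^r_\alpha(y)(\phi^0_{m,\hat{\eta}})_r$ up to an additive constant vector $\zeta_{m,\hat{\eta}}$; the mean constraint $\hat{\eta}\cdot\int_{\mathbb{T}^d}\phi'_{m,\hat{\eta}}(\cdot;0)=0$ together with $\int_{\mathbb{T}^d}\chi^r_\alpha=0$ shows $\zeta_{m,\hat{\eta}}\perp\hat{\eta}$, and the pressure normalization $\int_{\mathbb{T}^d}q'_{m,\hat{\eta}}(\cdot;0)=0$ pins down $q'_{m,\hat{\eta}}(y;0)=i\hat{\eta}_\alpha\pi^r_\alpha(y)(\phi^0_{m,\hat{\eta}})_r$ uniquely. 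This gives items~(iii) and~(iv).

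For item~(v) I differentiate \eqref{q-prime} once more to obtain \eqref{q-double-prime}, evaluate at $\delta=0$, substitute the zeroth- and first-order data into $F(0)$, and integrate over $\mathbb{T}^d$. All $\nabla\cdot(\cdot)$ contributions and the $D(0)q''$ term annihilate, leaving a linear relation among $q''_{0,m,\hat{\eta}}(0)\hat{\eta}$, $\lambda''_{m,\hat{\eta}}(0)\phi^0_{m,\hat{\eta}}$, and a cell integral; inserting $\phi'_{m,\hat{\eta}}(y;0)=i\hat{\eta}_\alpha\chi^r_\alpha(\phi^0_{m,\hat{\eta}})_r+\zeta_{m,\hat{\eta}}$ and using that $\zeta_{m,\hat{\eta}}$ is $y$-independent while the cell fields are mean-zero, the integrand reorganizes — via the identity $\delta_{lk}\delta_{\alpha\beta}+(\nabla\chi^l_\beta)_{k\alpha}=\nabla(\chi^l_\beta+y_\beta e_l):\nabla(y_\alpha e_k)$ used in \eqref{second-derivative} — into $(A^{*})^{kl}_{\alpha\beta}\hat{\eta}_\alpha\hat{\eta}_\beta(\phi^0_{m,\hat{\eta}})_l=[M(\hat{\eta},A^{*})\phi^0_{m,\hat{\eta}}]_k$, which is precisely \eqref{eq3.10}; projecting onto $\hat{\eta}$ and onto $\phi^0_{m,\hat{\eta}}$ then recovers the scalar formulas for $q''_{0,m,\hat{\eta}}(0)$ and $\lambda''_{m,\hat{\eta}}(0)$. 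The main obstacle — and the only point genuinely beyond \cite{ACFO} — is this last computation: one must verify that after the substitution the integrand reassembles into the symmetrized gradient pairing defining $(A^{*})^{kl}_{\alpha\beta}$ in \eqref{eq7} rather than some unsymmetrized variant, and one must track correctly the appearance of the Lagrange-multiplier term $q''_{0,m,\hat{\eta}}(0)\hat{\eta}$, which has no counterpart in the unconstrained elasticity setting; once the displayed algebraic identity is available, the remaining steps are routine integrations by parts on $\mathbb{T}^d$.
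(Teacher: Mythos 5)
Your proposal is correct and follows essentially the same route as the paper: successive differentiation of the modified spectral problem \eqref{eq3.1} in $\delta$, evaluation at $\delta=0$, integration over $\mathbb{T}^d$ to isolate the Lagrange multiplier and eigenvalue derivatives, identification of the first-order system with the cell problem \eqref{eq8}, and reassembly of the second-order integrand into $M(\hat{\eta},A^{*})\phi^0_{m,\hat{\eta}}$ as in \eqref{second-derivative}. The only cosmetic difference is that you project the first-order identity onto both $\hat{\eta}$ and $\phi^0_{m,\hat{\eta}}$, whereas the paper projects onto $\hat{\eta}$ alone and then uses that $\phi^0_{m,\hat{\eta}}$ is a unit vector; the two are equivalent.
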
 

\begin{remark} 
The above matrix $M (\hat{\eta}, A^\ast)$ is precisely that which must be positive definite 
in the Legendre-Hadamard definition of ellipticity.  
A relation analogous to \eqref{eq3.10} is called \lq \lq propagation relation" in \cite{GV} in the study of linearized elasticity system and it shows how the homogenized tensor $A^{*}$ enters into the Bloch wave analysis. The above relation \eqref{eq3.10} generalizes the relation (22) in \cite{ACFO}. 
\end{remark}

\begin{remark}
In the linearized elasticity system, the propagation relation is an eigenvalue relation. 
Here, relation \eqref{eq3.10} can again be seen as an eigenvalue problem, posed in the 
$(d-1)$-dimensional subspace orthogonal to $\hat{\eta}$. More precisely, 
$1/2 \lambda_{m,\hat{\eta}}^{\prime \prime} (0)$ is an eigenvalue and 
$\phi_{m,\hat{\eta}}^0$ (which is orthogonal to $\hat{\eta}$) is an eigenvector 
of the restriction of the matrix $M(\hat{\eta},A^{\ast})$ 
to the subspace $\hat{\eta}^\perp$. In \eqref{eq3.10} $1/2 q^{\prime\prime}_{0,m,\hat{\eta}}(0)$ 
is the Lagrange multiplier corresponding to the constraint that the eigenvalue problem 
is posed in the $(d-1)$-dimensional subspace orthogonal to $\hat{\eta}$.
\end{remark}

\paragraph{ Case of Symmetrized gradient : \\ \\}
We recall the incompressible elasticity system \eqref{sop} with the symmetrized gradient introduced in Section 1.
\begin{equation} \left. \begin{array}{rllllll} 
 -\nabla  \cdot      (\mu^\epsilon E(u_s^\epsilon))  + \nabla p_s^\epsilon &=& f \mbox{  in } \Omega, \\[2mm]
 \nabla \cdot  u_s^\epsilon &=&  0 \mbox{   in } \Omega,  \\[2mm]
 u_s^\epsilon  &=& 0   \mbox{ on }\partial\Omega. \\[2mm]
 \end{array} \right\}  \label{sop1} \end{equation}
where $E (v) =  \frac{1}{2}\lb \nabla v + \nabla^t  v \rb.$\\
\\
We introduce Bloch waves associated to the Stokes operator defined in \eqref{sop1}. \\
Find $\lambda_s = \lambda_s (\eta) \in \mathbb{R},  \phi_s =\phi_s  (\eta)  \in  H^1 (\mathbb{T}^d)^d , \   \phi_s \neq 0$ and  $\pi_s = \pi_s (\eta) \in L^2 (\mathbb{T}^d)$ satisfying 
\begin{equation} \left. \begin{array}{rllllll}
- D (\eta) \cdot (\mu E (\eta) \phi_s) + D (\eta) \pi_s  &=& \lambda_s  (\eta) \phi_s \mbox{ in } \mathbb{R}^d \\[2mm]
 D (\eta) \cdot \phi_s &=& 0 \mbox{ in } \mathbb{R}^d \\[2mm]
 (\phi_s, \pi_s)  \mbox{ is } Y &-& \mbox{ periodic} \\[2mm]
\int\limits_{Y} |\phi_s|^2 dy &=&1.
\end{array} \right\} 	\label{sep}   \end{equation}
\noindent  As usual $D (\eta)  =\nabla+ i\eta$ is the shifted gradient operator and
the shifted strain rate tensor is defined by :
$$ \begin{array}{rlllll}
 2E (\eta)  \psi &=& (\nabla +i \eta) \psi +  (\nabla  +i \eta)^t \psi, \\
\left(2E  (\eta) \psi \right)_{kl}  &=& \left(\frac{\partial \psi_k}{\partial x_l} + i \eta_l  \psi_k 
\right) +   \left(\frac{\partial \psi_l}{\partial x_k} + i \eta_k  \psi_l \right). 
\end{array} $$ 
\noindent
As earlier, we modify the spectral problem \eqref{sep} as follows :
%Here also we observe that at $\eta=0$, the corresponding eigenvalue $\lambda (0)$ is equal to zero and its multiplicity is $d$ accomplished with the eigenvectors $e_k, k=1 \ldots d$ and corresponding eigen-pressures are zero. Continuing the previous analogy,
% due to this degeneracy of the spectral elements which are not guaranteed to be  smooth at  $\eta=0$; we consider the directional regularity as we 
% approach $\eta=0$ to redefine the spectral problem \eqref{sep}   as the following :
Find  $\lambda_s(\delta) \in  \mathbb{R},   \phi_s(.;\delta) \in  H^1 (\mathbb{T})^d, \ q_s(.;\delta) \in L^2_0 (\mathbb{T}^d)$ and  $q_{0,s}(\delta) \in  \mathbb{C}$ satisfying   
 
\begin{equation} \left. \begin{array}{rllllll}
\displaystyle{ -D (\delta e) \cdot  (\mu(y)   E (\delta e )\phi_s(y;\delta)  )+  D  (\delta e ) q_s(y.;\delta)  +  q_{0,s}(\delta)  e  } &=& \displaystyle{  \lambda_s(\delta)  \phi_s(y;\delta)    \mbox{   in } \mathbb{T}^d } \\[2mm]
\displaystyle{  D (\delta e) \cdot \phi_s(y;\delta) } &=&  0  \mbox{  in }  \mathbb{T}^d \\[2mm]  
\displaystyle{ e \cdot \int\limits_{\mathbb{T}^d}  \phi_s(y;\delta)  dy  }&=& 0, \\[2mm]
(\phi_s, q_s)  \mbox{ is } Y &-& \mbox{ periodic,} \\[2mm]
\displaystyle{ \int\limits_{\mathbb{T}^d} |\phi_s(y;\delta)^2  dy } &=& 1. 
\end{array} \right\} 	\label{sepn}  \end{equation}
%We consider the above system (\ref{sepn}) with $\delta$ small as a perturbation from the  following one which corresponds to $\delta =0:$
%\begin{equation} \left. \begin{array}{cccc}
%w_s  =w_s (y ,\hat{\eta}), q_s  = q_s (y;  \hat{\eta}), q_{0,s},=q_{0,s} (\hat{\eta}) \\[2mm]
%-\nabla  \cdot  (\mu E (w_s)) + \nabla q_s  + q_{0,s}  \hat{\eta}  = \nu_s (\hat{\eta}) w_s  \mbox{ in } \mathbb{R}^d \\[2mm]
%\nabla \cdot w_s =0  \mbox{ in } \mathbb{R}^d \\[2mm]
%(w_s, q_s) \mbox{ is } Y -\mbox{ periodic} \\[2mm]
%\hat{\eta} \cdot \int\limits_{Y}  w_s dy =0 \\[2mm]
%\int\limits_{Y} |w_s|^2 dy =1.
%\end{array} \right\} 	 \end{equation}
As before, we can compute directional derivatives of the solution of \eqref{sepn} and prove a result completely analogous to Theorem \ref{thm3.1}. In particular, we will have the following propagation relation :
For $m = 1,  \ldots d-1$ and for fixed direction 
$\hat{\eta}\in \mathbb{S}^{d -1}$  the second derivative of the eigenvalue $\lambda_{s,m,\hat{\eta}}({\delta})$ at $\delta =0$ 
satisfies the  relation  
\begin{equation}\label{spl}
 \frac{1}{2} \lambda_{s,m,\hat{\eta}}^{\prime \prime} (0) \phi_{s,m,\hat{\eta}}^0 = \frac{1}{2}q_{0,s,m,\hat{\eta}}^{\prime \prime} (0)\hat{\eta} +  M  (\hat{\eta}, A_s^{\ast})\phi_{s,m,\hat{\eta}}^0, 
\end{equation}
%or,
%\begin{equation}
% \frac{1}{2} \lambda_{s,m,\hat{\eta}}^{\prime \prime}(0) = (\phi_{s,m,\hat{\eta}}^0)^t M  (\hat{\eta}, A_s^{\ast  })\cdot\phi_{s,m,\hat{\eta}}^0
% \end{equation}
where   $M (\hat{\eta}, A_s^\ast) $  is the  matrix  whose   entries are given by 
$$ M  (\hat{\eta}, A_s^\ast)_{jl} =    (A_s^{*})^{jl}_{\alpha \beta}  \hat{\eta}_\alpha  \hat{\eta}_\beta.$$

\section{Recovery of homogenized tensor from Bloch waves }\label{sec4}
\setcounter{equation}{0} 

In the scalar self-adjoint case, it is known that the homogenized matrix is equal 
to one-half the Hessian of the first Bloch eigenvalue at zero momentum \cite{CV}. 
In the general (non-symmetric) scalar case, treated in \cite{GV2}, it was shown that 
only the symmetric part of the homogenized matrix is determined by the Bloch spectrum 
and it is given again by the same one-half of the Hessian of the first Bloch eigenvalue 
(which exists by virtue of the Krein-Rutman theorem). The fact that only the 
symmetric part of the homogenized matrix plays a role is not a big surprise since, 
the homogenized tensor $A^{*}$ being constant, the differential operator
$$
\nabla \cdot A^{*} \nabla = \sum_{k,l=1}^d A^{*}_{kl} \frac{\partial^2}{\partial x_k \partial x_l}
$$
depends only on the symmetric part of $A^{*}$. 

In the case of systems, another phenomenon takes place. For example, the 
linearized elasticity system (in which there are no differential constraints) was treated in \cite{GV} 
where it was recognized that not only Bloch eigenvalues but also Bloch eigenfunctions at zero
momentum are needed to determine the homogenized tensor. More precisely, this connection between 
Bloch eigenvalues and eigenfunctions, on the one hand, and the homogenized tensor, on the other 
hand, was expressed via a relation called {\it propagation relation} in \cite{GV} which 
uniquely determines the homogenized tensor. 

In the case of Stokes system, a new phenomenon arises because of the presence of a differential 
constraint (the incompressibility condition). Even though there is an analogue of the propagation 
relation (see \eqref{eq3.10} above), it does not determine uniquely the homogenized tensor.  
In fact the propagation relation \eqref{eq3.10} is unaltered  if we add a multiple of $I \otimes I$ 
(where $I$ is the $d \times d$ identity matrix) to the homogenized tensor. The homogenized 
Stokes operator clearly remains the same under such an addition since it corresponds to adding 
a gradient of the velocity divergence which vanishes because of the incompressibility constraint.  
The authors in \cite{ACFO} conjectured that the homogenized Stokes tensor is uniquely 
characterized by the propagation relation up to the addition of a term $c (I \otimes I)$ 
(where $c$ is a constant). We prove this assertion in the case of the Stokes system \eqref{sop} 
with a symmetrized gradient. For the other Stokes system \eqref{eq1}, the homogenized tensor 
is not uniquely determined by the propagation relation \eqref{eq3.10}.
In this section, we investigate this non-uniqueness. 
Neverheless, we shall prove that for both Stokes systems the homogenized operators 
\eqref{eq9}, and its equivalent for the symmetric gradient case of \eqref{sop}, 
are uniquely determined. \\

Our concern now is the following question: to what extent do the Bloch spectral elements determine 
the homogenized tensor $A^\ast$ via the propagation relation (\ref{eq3.10})~?  
Since $\lambda^{\prime\prime}_{m,\hat{\eta}}(0), q^{\prime\prime}_{0,m,\hat{\eta}}(0),\phi^0_{m,\hat{\eta}}$ are known from Bloch spectral data, it follows that $M(\hat{\eta},A^{*})\phi^0_{m,\hat{\eta}}$ is uniquely determined via the relation (\ref{eq3.10}). But it may happen that different tensors $A^\ast$ give rise to the same matrix $M(\hat{\eta},A^{*})$. Three main results are proved in this section and they are stated in the following three propositions. 

\begin{proposition}\label{prop 3.1}
Let $A^{*}$ and $B^{*}$ be two fourth order tensors possessing the simple symmetry \eqref{eq9A}. 
They satisfy the same propagation relation \eqref{eq3.10}, 
if and only if
\begin{equation}\label{eq.AB}
  B^{*} - A^{*} = c (I \otimes I) + N
\end{equation}
where $I$ is the $d \times d$ identity matrix and $N$ is a fourth order tensor satisfying, 
on top of the simple symmetry \eqref{eq9A}, the following anti-symmetry property
\begin{equation} \left. \begin{array}{rllllll}
N^{jl}_{\alpha\beta} &=& - N^{jl}_{\beta\alpha} = -N^{lj}_{\alpha\beta} \quad\mbox{whenever, }  (\alpha, \beta) \neq (j,  l) \mbox{ and }  (\beta, \alpha ) \neq (j, l) \\[2mm]
N^{ii}_{ii}&=& 0.
\end{array} \right\} \label{Np}  
\end{equation}
\end{proposition}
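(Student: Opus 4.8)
The plan is to reduce both directions to an algebraic statement about the difference $C:=B^{*}-A^{*}$, which inherits the simple symmetry \eqref{eq9A}. The first point is that, by \eqref{eq9A}, the matrix $M(\hat{\eta},T)$ with entries $T^{kl}_{\alpha\beta}\hat{\eta}_\alpha\hat{\eta}_\beta$ is symmetric for every tensor $T$ with that symmetry (relabel $\alpha\leftrightarrow\beta$). Next, reading \eqref{eq3.10} together with the remarks after Theorem \ref{thm3.1}: for fixed $\hat{\eta}$, the relation says precisely that the $\phi^0_{m,\hat{\eta}}$ form an orthonormal eigenbasis of the restriction of $M(\hat{\eta},A^{*})$ to $\hat{\eta}^{\perp}$ with eigenvalues $\frac{1}{2}\lambda^{\prime\prime}_{m,\hat{\eta}}(0)$, and that $\frac{1}{2} q^{\prime\prime}_{0,m,\hat{\eta}}(0)=-\hat{\eta}\cdot M(\hat{\eta},A^{*})\phi^0_{m,\hat{\eta}}$. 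Hence ``$A^{*}$ and $B^{*}$ satisfy the same propagation relation'' — i.e. \eqref{eq3.10} holds for both with the same data $(\lambda^{\prime\prime}_{m,\hat{\eta}}(0),\phi^0_{m,\hat{\eta}},q^{\prime\prime}_{0,m,\hat{\eta}}(0))$ — is equivalent to $M(\hat{\eta},C)\phi^0_{m,\hat{\eta}}=0$ for all $m$ and all $\hat{\eta}$, and since $\{\phi^0_{1,\hat{\eta}},\dots,\phi^0_{d-1,\hat{\eta}}\}$ spans $\hat{\eta}^{\perp}$, to
\[
M(\hat{\eta},C)\,v=0\qquad\text{for all }\hat{\eta}\in\mathbb{S}^{d-1}\text{ and all }v\perp\hat{\eta}.
\]
Because $M(\hat{\eta},C)$ is symmetric and its kernel contains the hyperplane $\hat{\eta}^{\perp}$, this is in turn equivalent to $M(\hat{\eta},C)=c(\hat{\eta})\,\hat{\eta}\otimes\hat{\eta}$ for some a priori direction-dependent scalar $c(\hat{\eta})$.

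For sufficiency I would simply compute. If $C=c\,(I\otimes I)+N$, then $M(\hat{\eta},I\otimes I)=\hat{\eta}\otimes\hat{\eta}$ because $(I\otimes I)^{kl}_{\alpha\beta}=\delta_{k\alpha}\delta_{l\beta}$, while $M(\hat{\eta},N)\equiv 0$: for each $(k,l)$ the conditions \eqref{Np} (together with $N^{ii}_{ii}=0$) force the symmetrisation of $N^{kl}_{\alpha\beta}$ over $(\alpha,\beta)$ to vanish, and contracting an $(\alpha,\beta)$-antisymmetric array with $\hat{\eta}_\alpha\hat{\eta}_\beta$ gives $0$. Therefore $M(\hat{\eta},B^{*})=M(\hat{\eta},A^{*})+c\,\hat{\eta}\otimes\hat{\eta}$; since $\hat{\eta}\cdot\phi^0_{m,\hat{\eta}}=0$, the extra term annihilates every $\phi^0_{m,\hat{\eta}}$, so the restriction of $M(\hat{\eta},\cdot)$ to $\hat{\eta}^{\perp}$ and the quantities $\hat{\eta}\cdot M(\hat{\eta},\cdot)\phi^0_{m,\hat{\eta}}$ are unchanged, i.e. \eqref{eq3.10} holds for $B^{*}$ with exactly the same data. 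Taking $N=0$ here re-derives the observation, made just before the statement, that adding a multiple of $I\otimes I$ does not alter the propagation relation.

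For necessity, start from $M(\hat{\eta},C)=c(\hat{\eta})\,\hat{\eta}\otimes\hat{\eta}$ and homogenise: with $\tilde c(\eta):=c(\eta/|\eta|)$ one gets the identity $C^{kl}_{\alpha\beta}\,\eta_\alpha\eta_\beta=\tilde c(\eta)\,\eta_k\eta_l$ for all $\eta\in\mathbb{R}^{d}\setminus\{0\}$, the left-hand side being a quadratic polynomial. Taking $k=l$ gives $\tilde c(\eta)\,\eta_k^{2}=C^{kk}_{\alpha\beta}\eta_\alpha\eta_\beta=:P_k(\eta)$; equating the expressions for two distinct indices yields the polynomial identity $\eta_m^{2}P_k=\eta_k^{2}P_m$, and since $\eta_k,\eta_m$ are coprime irreducibles, $\eta_k^{2}\mid P_k$. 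As $\deg P_k=2$, necessarily $P_k=c_0\,\eta_k^{2}$ with a single constant $c_0$ independent of $k$; hence $\tilde c\equiv c_0$ and $C^{kl}_{\alpha\beta}\eta_\alpha\eta_\beta=c_0\,\eta_k\eta_l$ for all $k,l$. Now set $N:=C-c_0\,(I\otimes I)$. Then $N$ has the simple symmetry, and $N^{kl}_{\alpha\beta}\eta_\alpha\eta_\beta\equiv 0$ for every $(k,l)$, i.e. $N^{kl}_{\alpha\beta}=-N^{kl}_{\beta\alpha}$ for all indices; combining this with the simple symmetry $N^{kl}_{\alpha\beta}=N^{lk}_{\beta\alpha}$ also gives $N^{kl}_{\alpha\beta}=-N^{lk}_{\alpha\beta}$, in particular $N^{ii}_{ii}=0$, so the conditions \eqref{Np} hold. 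This yields $B^{*}-A^{*}=c_0\,(I\otimes I)+N$ with $N$ as claimed.

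The step I expect to be the main obstacle is the passage, in the necessity part, from the family of rank-one identities $M(\hat{\eta},C)=c(\hat{\eta})\,\hat{\eta}\otimes\hat{\eta}$ to the rigid conclusion $C^{kl}_{\alpha\beta}\eta_\alpha\eta_\beta=c_0\,\eta_k\eta_l$: one must show that the a priori $\hat{\eta}$-dependent scalar is in fact constant, which is exactly where the divisibility argument in the polynomial ring enters. Some care is also needed at the outset to make precise what ``the same propagation relation'' means — namely equality of the linear maps $v\mapsto M(\hat{\eta},\cdot)\,v$ on $\hat{\eta}^{\perp}$ for every $\hat{\eta}$ — which uses both that the $\phi^0_{m,\hat{\eta}}$ span $\hat{\eta}^{\perp}$ and that the matrices $M(\hat{\eta},\cdot)$ are symmetric; once that reformulation is in place, both implications are elementary linear algebra plus the single polynomial fact.
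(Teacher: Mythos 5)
Your proof is correct and reaches the same decomposition as the paper, but the necessity direction is organized differently. The paper, after reducing (exactly as you do) to $M(\hat{\eta},\widetilde{N})=c(\hat{\eta})\,\hat{\eta}\otimes\hat{\eta}$, simply asserts in one line that the quadratic dependence on $\hat{\eta}$ forces $c$ to be constant, and then extracts the component identities by testing the relation at $\hat{\eta}=e_i$ and $\hat{\eta}=e_i+e_k$ followed by a four-case analysis (their Steps 3--5). You instead prove the constancy of $c$ honestly, via the polynomial identity $\eta_m^2 P_k=\eta_k^2 P_m$ and divisibility by the coprime irreducibles $\eta_k$, and then obtain the antisymmetrization $N^{kl}_{\alpha\beta}+N^{kl}_{\beta\alpha}=0$ for \emph{all} index pairs in one stroke from $N^{kl}_{\alpha\beta}\eta_\alpha\eta_\beta\equiv 0$. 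This buys two things: it fills in the step the paper only gestures at, and it yields a slightly stronger (and cleaner) form of the antisymmetry than the literal statement \eqref{Np}, which excludes the pairs $(\alpha,\beta)\in\{(j,l),(l,j)\}$. That stronger form is in fact what the sufficiency direction uses: to get $M(\hat{\eta},N)=0$ one needs $N^{jl}_{jl}+N^{jl}_{lj}=0$ as well, a relation not covered by the ``whenever'' clause of \eqref{Np} (the paper's Step~4 does establish it for the $N$ it constructs, but does not list it). Both you and the paper pass over this point in the sufficiency computation; your necessity argument at least produces the needed property explicitly. The reformulation of ``same propagation relation'' as equality of the symmetric maps $M(\hat{\eta},\cdot)$ restricted to $\hat{\eta}^{\perp}$, and the sufficiency computation using $\hat{\eta}\cdot\phi^0_{m,\hat{\eta}}=0$, coincide with the paper's Steps 1--2.
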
 

\begin{proof}
First of all, let us note that the addition of $c (I \otimes I)$ and $N$, 
having properties \eqref{eq9A} and \eqref{Np}, to $A^\ast$ does not alter the propagation 
relation (\ref{eq3.10}). Indeed, we have, 
$$
\begin{array}{lllll}
M (\hat{\eta}, A^\ast + c (I \otimes I) + N )_{jl} &=& {(A^{*})}_{\alpha \beta}^{jl} \hat{\eta}_\alpha \hat{\eta}_\beta 
+ c \delta_{\alpha j} \delta_{\beta l} \hat{\eta}_\alpha \hat{\eta}_\beta 
+ N^{jl}_{\alpha\beta}\hat{\eta}_\alpha  \hat{\eta}_\beta\\[2mm] 
 &=& M  (\hat{\eta},  A^\ast)_{jl}   + c {\hat{\eta}}_j {\hat{\eta}}_l  \, .
\end{array}
$$
Since $\phi_{m,\hat{\eta}}^0$ is orthogonal to $\hat{\eta}$, we deduce 
$$
M (\hat{\eta}, A^\ast + c (I \otimes I) + N) \phi_{m,\hat{\eta}}^0 = M (\hat{\eta}, A^\ast) \phi_{m,\hat{\eta}}^0 . 
$$ 
Conversely, let us assume that there are two fourth-order tensors $A^\ast$ and $B^\ast$, 
possessing the simple symmetry (\ref{eq9A}) and such that 
$M(\hat{\eta},A^{*}) \phi_{m,\hat{\eta}}^0 = M(\hat{\eta},B^{*}) \phi_{m,\hat{\eta}}^0$, $m=1,...,d-1$, 
for all $\hat{\eta}\in\mathbb{S}^{d-1}$. We must then deduce \eqref{eq.AB}.   
For convenience,  the proof is divided into five steps.\\

\noindent {\bf Step 1.} 
First of all, we check that the matrix $M(\hat{\eta}, A^\ast)$ is symmetric. 
By interchanging the dummy indices $\alpha$ and $\beta$ and using the simple symmetry (\ref{eq9A}) 
of the homogenized coefficients, ${(A^{*})}^{jl}_{\alpha\beta} = {(A^{*})}^{lj}_{\beta\alpha}$, we get
\begin{equation}\label{symmetric}
M(\hat{\eta}, A^\ast)_{jl} = (A^{*})^{jl}_{\alpha \beta}  \hat{\eta}_\alpha  \hat{\eta}_\beta = {(A^{*})}^{jl}_{\beta\alpha}\hat{\eta}_\beta\hat{\eta}_\alpha={(A^{*})}^{lj}_{\alpha\beta}\hat{\eta}_\alpha\hat{\eta}_\beta = M(\hat{\eta}, A^\ast)_{lj} 
\end{equation}
which shows the required symmetry.\\

\noindent {\bf Step 2.} 
For $\widetilde{N}= B^\ast -A^\ast$ define $M (\hat{\eta}) = M (\hat{\eta},\widetilde{N}) = M (\hat{\eta} , B^\ast) - M (\hat{\eta} , A^\ast)$.   
Since $A^\ast$ and $B^\ast$ satisfy (\ref{eq3.10}), it follows that $M(\hat{\eta}) \phi^0_{m,\hat{\eta}} = 0$ for 
$m=1,...,d-1$. Since the family $\phi^0_{m,\hat{\eta}}$ is a basis of the orthogonal space to $\hat{\eta}$, it 
implies that $M (\hat{\eta}) = c(\hat{\eta}) \hat{\eta} \otimes \hat{\eta}$ for some scalar $c(\hat{\eta})$. 
Since $M (\hat{\eta})$ depends quadratically on $\hat{\eta}$, it must be that $c(\hat{\eta})$ is
independent of $\hat{\eta}$. Thus, for $c\in\mathbb{R}$, we have $M (\hat{\eta}) = c \, \hat{\eta} \otimes \hat{\eta}$, 
that is, for any $\hat{\eta}\in\mathbb{S}^{d-1}$,
\begin{equation}
\widetilde{N}_{\alpha \beta}^{jl} \hat{\eta}_{\alpha} \hat{\eta}_{\beta}  = c \hat{\eta}_j \hat{\eta}_l \quad 
1 \leq j, l \leq d . 
\label{eq3.3} 
\end{equation}

\noindent {\bf Step 3.}  Under condition \eqref{eq3.3}, we verify that 
\begin{eqnarray}
\widetilde{N}_{ii}^{ii}  &=& c  \  \ \forall \  \   i .\label{eq3.4}  \\
\mbox{and }\quad \widetilde{N}_ {ik}^{jl} + \widetilde{N}_{ki}^{jl} &=& 0 \mbox{ if }  (i, k) \neq (j,  l) \mbox{ and }  (k, i ) \neq (j, l). \label{eq3.5} 
\end{eqnarray}
\noindent For this purpose, let us take $\hat{\eta} =  e_i$ in (\ref{eq3.3}). We obtain
 $\widetilde{N}_{ii}^{jl}= c \delta_{ij} \delta_{il} $ and so
\begin{eqnarray}
 \widetilde{N}_{ii}^{ii} &=& c \\
 \mbox{and}\quad  \widetilde{N}_{ii}^{jl}  &=& 0 \mbox{ if } i \neq j  \mbox{ or } i \neq  l. \label{eq3.6} 
 \end{eqnarray}
In particular, \eqref{eq3.4} is proved. 
Next, choosing $\hat{\eta} =e_{i}  +e_k$ in  (\ref{eq3.3}), we get   
\begin{equation}
\widetilde{N}_{ii}^{jl}  +\widetilde{N}_{kk}^{jl}  + \widetilde{N}_{ik}^{jl} +\widetilde{N}_{ki}^{jl}  = c (\delta_{ji}  + \delta_{jk})  (\delta_{li} + 
\delta_{lk}). \label{eq3.7} 
\end{equation}
To check \eqref{eq3.5}, there are several cases to consider. 
\begin{enumerate}
\item[(i)]  $(i\neq  j$  and $ k\neq j)$. In this case, (\ref{eq3.5}) is a direct consequence   of 
(\ref{eq3.6}) and (\ref{eq3.7}). 
\item[(ii)] Similarly, for $(k\neq  l$  and $ i\neq l)$ (\ref{eq3.5}) is a direct consequence   of 
(\ref{eq3.6}) and (\ref{eq3.7}).
\item[(iii)]  $(i \neq j, \  k=j ).$ In this case, 
\begin{equation}
\widetilde{N}_{jj}^{jl}  + \widetilde{N}_{ij}^{jl} + \widetilde{N}_{ji}^{jl}  = c (\delta_{li}  + \delta_{lj}).  
\end{equation}
Now together with $i \neq l$ we have
\begin{equation}
 \widetilde{N}_{jj}^{jl}  + \widetilde{N}_{ij}^{jl} + \widetilde{N}_{ji}^{jl}   = c\delta_{lj}.  
\end{equation}
Then both $j=l$ or $j\neq l$ cases lead to verify \eqref{eq3.4} and \eqref{eq3.5} respectively. 

\item[(iv)] Similarly, for $(k\neq  l$  and $ i = l)$ 
\begin{equation}
\widetilde{N}_{ii}^{ji}  + \widetilde{N}_{ik}^{ji} + \widetilde{N}_{ki}^{ji}  = c (\delta_{ji}  + \delta_{jk}). 
\end{equation}
Together with $k \neq j$ we have
\begin{equation}
 \widetilde{N}_{ii}^{ji}  + \widetilde{N}_{ik}^{ji} + \widetilde{N}_{ki}^{ji}   = c\delta_{ji}.  
\end{equation}
Then both $i=j$ or $i\neq j$ cases lead to verify \eqref{eq3.4} and \eqref{eq3.5} respectively. 
\end{enumerate}
\noindent {\bf Step 4.} 
Now we consider the two remaining cases not covered in \eqref{eq3.5}.
\begin{enumerate}
\item[(i)] $(i, k) = (j, l)$.  Then from \eqref{eq3.7} we have
$$ \widetilde{N}_{ii}^{ik}  +\widetilde{N}_{kk}^{ik}  + \widetilde{N}_{ik}^{ik} + \widetilde{N}_{ki}^{ik}  = c (1  + \delta_{ik})^2.  $$
For $i\neq k$ it gives using \eqref{eq3.6}
\begin{equation} 
\widetilde{N}_{ik}^{ik} + \widetilde{N}_{ki}^{ik}  = c. 
\label{r1}\end{equation}
\item[(ii)] Similarly, for $(k,i) = (j,l)$, together with $i \neq k$ we have
\begin{equation} 
\widetilde{N}_{ik}^{ki} + \widetilde{N}_{ki}^{ki} = c 
\label{r2}\end{equation}
\end{enumerate}
\noindent {\bf Step 5.}  Let us set $N  = \widetilde{N} - c  (I \otimes   I).$ Thanks  to  the properties  
(\ref{eq3.4}) and  (\ref{eq3.5}), we can easily check that $N$ is an 
anti-symmetric tensor  in the sense   that it satisfies  
\begin{equation}\label{nedit}
N_{ik}^{jl} =  -N_{ki}^{jl} = -N_{ik}^{lj}. \quad\mbox{whenever, }  (i, k) \neq (j,  l) \mbox{ and }  (k, i ) \neq (j, l)
\end{equation}
From its very definition  $N$ also possesses the symmetry 
$ N_{ik}^{jl} = N_{ki}^{lj}.$ 
Thus $N$ has all the properties listed in \eqref{Np}.
\end{proof}

\noindent
Next we extend Proposition \ref{prop 3.1} to the Stokes system \eqref{sop}, 
featuring a symmetric gradient tensor. In this case the propagation relation 
\eqref{eq3.10} is replaced by \eqref{spl} and the homogenized tensor is 
denoted by $A^{*}_s$.

\begin{proposition}
The propagation relation \eqref{spl} characterizes uniquely the tensor $A_s^\ast$, 
up to the addition of a constant multiple of $I \otimes I$. In other words, 
$A^{*}_s$ and $B^{*}_s$ satisfy the same propagation relation \eqref{spl} 
if and only if, for some $c\in \mathbb{R}$, 
\begin{equation}\label{sn} 
  B_s^{*} - A_s^{*} = c (I \otimes I).
\end{equation}
\end{proposition}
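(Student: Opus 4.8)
The plan is to bootstrap from Proposition~\ref{prop 3.1}, exploiting that the elasticity tensors $A_s^{\ast}$ and $B_s^{\ast}$ enjoy the \emph{full} Hooke-type symmetry \eqref{eq9sA}, which is strictly stronger than the simple symmetry \eqref{eq9A}. The ``if'' direction is immediate: if $B_s^{\ast}-A_s^{\ast}=c\,(I\otimes I)$ then, exactly as in Step~1 of the proof of Proposition~\ref{prop 3.1}, one has $M(\hat{\eta},B_s^{\ast})_{kl}=M(\hat{\eta},A_s^{\ast})_{kl}+c\,\hat{\eta}_k\hat{\eta}_l$, and since each $\phi^0_{s,m,\hat{\eta}}$ is orthogonal to $\hat{\eta}$ the added rank-one term annihilates it, so $M(\hat{\eta},B_s^{\ast})\phi^0_{s,m,\hat{\eta}}=M(\hat{\eta},A_s^{\ast})\phi^0_{s,m,\hat{\eta}}$ and the propagation relation \eqref{spl} is unchanged.

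For the converse, observe that \eqref{spl} has the same structure as \eqref{eq3.10}: the zeroth order eigenvectors $\{\phi^0_{s,1,\hat{\eta}},\dots,\phi^0_{s,d-1,\hat{\eta}}\}$ still form an orthonormal basis of $\hat{\eta}^{\perp}$, and $M(\hat{\eta},A_s^{\ast})_{kl}=(A_s^{\ast})^{kl}_{\alpha\beta}\hat{\eta}_\alpha\hat{\eta}_\beta$. Hence the argument of Proposition~\ref{prop 3.1} applies with no change once one notes that the full symmetry \eqref{eq9sA} implies the simple symmetry \eqref{eq9A}: it produces $B_s^{\ast}-A_s^{\ast}=c\,(I\otimes I)+N$ for some $c\in\mathbb{R}$ and some fourth order tensor $N$ having the simple symmetry and the anti-symmetry \eqref{Np}. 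Moreover, from the intermediate identities \eqref{r1}--\eqref{r2} of that proof, in which the $c\,(I\otimes I)$ contribution cancels because $(I\otimes I)^{kl}_{\alpha\beta}=\delta_{k\alpha}\delta_{l\beta}$, one records the extra relation $N^{jl}_{jl}+N^{lj}_{jl}=0$ for $j\neq l$. The whole statement thus reduces to showing that such an $N$ vanishes, the decisive new input being that $N$ \emph{also} inherits the full symmetry \eqref{eq9sA}: indeed $B_s^{\ast}-A_s^{\ast}$ does, as a difference of fully symmetric tensors, and so does $I\otimes I$, hence so does $N$.

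The proof that $N\equiv0$ is then an index chase. First, \eqref{Np} alone forces $N^{jl}_{\alpha\beta}=0$ whenever $j=l$ or $\alpha=\beta$: applying $N^{jl}_{\alpha\beta}=-N^{lj}_{\alpha\beta}$ with $j=l$, resp. $N^{jl}_{\alpha\beta}=-N^{jl}_{\beta\alpha}$ with $\alpha=\beta$, gives self-cancellation in all the generic cases, the residual fully-repeated components being $N^{ii}_{ii}=0$. So only components with $j\neq l$ and $\alpha\neq\beta$ can survive. Combining the anti-symmetry with the extra moves in \eqref{eq9sA} — which transport an index between an upper and a lower slot, e.g. $N^{kl}_{\alpha\beta}=N^{\alpha l}_{k\beta}=N^{k\beta}_{\alpha l}$ — one finds that $N$ is in fact anti-symmetric under every transposition of its four index positions, whereas \eqref{eq9sA} makes it \emph{symmetric} under the transposition exchanging the first upper and first lower slots; since transpositions generate $S_4$, these two requirements are incompatible unless $N=0$ on every configuration with four distinct indices. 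A short case analysis disposes of the remaining configurations, in which exactly one index is shared across the two pairs: each such component is carried, by one of the moves in \eqref{eq9sA}, to a component with a repeated index inside a pair, hence is $0$ by the first observation. The only configuration not reached this way is the ``diagonal'' one $N^{jl}_{jl}$ with $j\neq l$; but here $N^{jl}_{lj}=N^{ll}_{jj}=0$ by \eqref{eq9sA} together with the first observation, and then $N^{jl}_{jl}=-N^{lj}_{jl}=-N^{jl}_{lj}=0$ by the recorded relation and the simple symmetry. Thus $N\equiv0$ and $B_s^{\ast}-A_s^{\ast}=c\,(I\otimes I)$.

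I expect the delicate point to be the bookkeeping of the exception ``whenever $(\alpha,\beta)\neq(j,l)$ and $(\beta,\alpha)\neq(j,l)$'' in \eqref{Np} while chaining symmetries: at each step of the reduction one must verify that the anti-symmetry is actually available on the configuration produced, and it is exactly the failure of this at the diagonal components $N^{jl}_{jl}$ that makes it necessary to carry along the extra relation inherited from \eqref{r1}--\eqref{r2} rather than treat \eqref{Np} as a self-contained black box. Everything else — checking that $I\otimes I$ is fully symmetric, propagating \eqref{r1}--\eqref{r2} to $N$, and the finite case analysis — is routine.
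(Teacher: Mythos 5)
Your proposal is correct and follows essentially the same route as the paper: the paper's proof also continues from Step 5 of Proposition \ref{prop 3.1}, notes that $N=\widetilde N-c(I\otimes I)$ inherits the full elasticity symmetry \eqref{eq9sA}, kills the generic components by chaining that symmetry against the anti-symmetry \eqref{Np}, and disposes of the diagonal components $N^{jl}_{jl}$, $N^{jl}_{lj}$ (where \eqref{Np} is unavailable) exactly as you do, via \eqref{r1}--\eqref{r2} together with \eqref{eq3.6}. The bookkeeping caveat you flag about when the anti-symmetry applies is real but handled identically in the paper.
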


\begin{proof}
The proof continues from the \noindent {\bf Step 5} of the previous proof of Proposition \ref{prop 3.1}.
We defined $N = \widetilde{N} - c(I\otimes I)$ satisfying \eqref{nedit} i.e.
$$N_{ik}^{jl} =  -N_{ki}^{jl} = -N_{ik}^{lj}. \quad\mbox{whenever, }  (i, k) \neq (j,l) \mbox{ and }  (k, i ) \neq (j, l)$$
Now as $\widetilde{N}= B^{*}_s - A^{*}_s$ possess with the symmetry of coefficients of linear elasticity, so we have  
\begin{equation}\label{nedit2} N_{ik}^{jl} =  N_{jk}^{il} =  N_{ki}^{lj} = N_{il}^{jk} \quad\mbox{for all }i,j,k,l. \end{equation}
This symmetry combined  with the  anti-symmetry established in the previous step implies  that $N =0.$ 
Note that antisymmetry property holds precisely for the interchange of those pairs of indices for which symmetry property does not hold.\\
This can  be seen as follows: whenever $ (i, k) \neq (j,  l) \mbox{ and }  (k, i ) \neq (j, l)$
\begin{align}
&N_{ik}^{jl} =  -N_{ik}^{lj} = -N_{lk}^{ij} = N_{lk}^{ji} =N_{kl}^{ij} =  N_{il}^{kj} = -N_{il}^{jk} = - N_{ik}^{jl} \\
\mbox{Thus }\quad &N_{ik}^{jl} =0.\label{nedit3}
\end{align}
Similarly, whenever $ (i, k) = (j,  l) \mbox{ or }  (k, i ) = (j, l)$ together with $i\neq k$;
from \eqref{r1}, \eqref{r2} we have 
$$ 
\widetilde{N}_{ik}^{ik} + \widetilde{N}^{ik}_{ki} \ = \ c\ = \ \widetilde{N}_{ik}^{ki} + \widetilde{N}_{ki}^{ki}. 
$$
Then using \eqref{nedit2} and \eqref{eq3.6} we clearly have 
\begin{equation}\label{nedit4} 
N_{ik}^{ik} = \ 0\ = \ N_{ki}^{ki}. 
\end{equation}
Therefore \eqref{nedit3}, \eqref{nedit4} imply that $N =0$ or, $\widetilde{N} =  c (I\otimes  I)$ and hence  $B_s^\ast  -A_s^\ast  = c (I \otimes I). $
\end{proof}

\begin{remark}
The conclusion of the above proposition was conjectured in \cite{ACFO} and it is proved here to be true whenever we are working with the system \eqref{sop} with symmetrized gradient. 
However, it is not true with the full gradient Stokes system \eqref{eq1} as shown by Proposition \ref{prop 3.1}. However, in both of these cases the propagation relation fixes the homogenized operator \eqref{eq9} uniquely, as is stated in the following proposition.
\end{remark}

\begin{proposition}
If \eqref{eq.AB} is satisfied, then $A^{*}$ and $B^{*}$ give rise to the same homogenized operator \eqref{eq9}.
\end{proposition}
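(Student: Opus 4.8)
The plan is to show that $A^{*}$ and $B^{*}$ define \emph{literally the same} differential operator once it is applied to divergence-free velocity fields, which is all that enters \eqref{eq9}. Concretely, I would fix an arbitrary $u$ with $\nabla\cdot u=0$ (say $u\in V$ as in \eqref{eq3}) and prove that the only part of \eqref{eq9} that depends on the choice of tensor, namely the constitutive term, does not change:
$$\frac{\partial}{\partial x_\beta}\!\Big((B^{*}-A^{*})^{kl}_{\alpha\beta}\,\frac{\partial u_k}{\partial x_\alpha}\Big)=0\quad\text{in }\mathcal D'(\Omega),\ \ l=1,\dots,d.$$
Equivalently, and to sidestep any regularity discussion, I would check that the associated bilinear form $(u,v)\mapsto\int_\Omega(B^{*}-A^{*})^{kl}_{\alpha\beta}\,\partial_\alpha u_k\,\partial_\beta v_l\,dx$ vanishes on $V\times V$. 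Since the incompressibility constraint, the homogeneous boundary condition, and the pressure gradient in \eqref{eq9} do not involve the tensor at all, this identity immediately implies that $(u,p)$ solves the $A^{*}$-system if and only if it solves the $B^{*}$-system, which is exactly the assertion.

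Next I would use \eqref{eq.AB} to write $B^{*}-A^{*}=c\,(I\otimes I)+N$ and treat the two terms separately. For $c\,(I\otimes I)$: the matrix $(I\otimes I)\nabla u$ has entries $\delta_{\beta l}\,(\nabla\cdot u)$, so the corresponding contribution is $c\,\partial_l(\nabla\cdot u)=0$ --- this is precisely the ``gradient of the velocity divergence'' observation recorded just before Proposition~\ref{prop 3.1}. For $N$: since $N$ is a constant tensor, its contribution to the $l$-th equation is $N^{kl}_{\alpha\beta}\,\partial^2_{\alpha\beta}u_k$. Because $\partial^2_{\alpha\beta}u_k=\partial^2_{\beta\alpha}u_k$, only the part of $N$ symmetric under $\alpha\leftrightarrow\beta$ survives; by the anti-symmetry \eqref{Np} one has $N^{kl}_{\alpha\beta}+N^{kl}_{\beta\alpha}=0$ for every $(\alpha,\beta)\notin\{(k,l),(l,k)\}$, so the sum collapses onto the configurations $(\alpha,\beta)\in\{(k,l),(l,k)\}$. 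The diagonal case $k=l$ drops out because $N^{ii}_{ii}=0$, and the off-diagonal case $k\neq l$ enters with the scalar factor $N^{kl}_{kl}+N^{kl}_{lk}$, which is zero by the relations \eqref{r1}--\eqref{r2} obtained in the proof of Proposition~\ref{prop 3.1}. Hence the $N$-contribution vanishes as well, and adding the two pieces gives the desired identity. For the incompressible-elasticity system \eqref{sop} the argument is the same but shorter, since by the previous proposition $B^{*}_s-A^{*}_s=c\,(I\otimes I)$ already, so only the first, trivial, term appears.

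The step I expect to be the real work is this $N$-term bookkeeping: one has to combine the symmetry of the second derivatives with the two anti-symmetries of $N$ to identify \emph{exactly} which quadruples $(k,l,\alpha,\beta)$ can contribute, and then invoke the precise residual relation that kills $N^{kl}_{kl}+N^{kl}_{lk}$. Everything else --- the $I\otimes I$ term, the passage between the strong and weak forms, and the conclusion about the solution pair $(u,p)$ --- is routine.
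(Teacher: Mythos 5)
Your argument is correct and lands exactly where the paper does, but by a more explicit route. The paper's proof is a three-line Fourier-symbol computation: the symbol of the difference operator is $(A^{*}-B^{*})^{kl}_{\alpha\beta}\xi_\alpha\xi_\beta$, which by the identity \eqref{eq3.3} equals $c\,\xi_k\xi_l$ (up to sign), i.e.\ the symbol of $u\mapsto -c\,\nabla(\nabla\cdot u)$, which vanishes on divergence-free fields. You instead stay in physical space, use the decomposition $c\,(I\otimes I)+N$ directly, and show that the two pieces die for different reasons: the $I\otimes I$ piece produces $c\,\partial_l(\nabla\cdot u)$ and genuinely needs incompressibility, while the $N$ piece vanishes \emph{identically} as a constant-coefficient operator, by pairing $(\alpha,\beta)$ with $(\beta,\alpha)$ against the symmetric second derivatives. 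This is more informative than the paper's version --- it isolates which part of the non-uniqueness is absorbed by the constraint and which part is invisible to any second-order operator --- and your weak-form phrasing cleanly sidesteps regularity; the paper buys brevity by reusing \eqref{eq3.3} wholesale. Your index bookkeeping is right, and you correctly identified the one delicate point: the surviving factor $N^{kl}_{kl}+N^{kl}_{lk}$ (for $k\neq l$) is \emph{not} controlled by \eqref{Np} as literally written, since those are precisely the excluded index configurations; it must be imported from \eqref{r1}--\eqref{r2}, as you do. Concretely, with $\widetilde N=B^{*}-A^{*}$ and $N=\widetilde N-c\,(I\otimes I)$ one has $N^{kl}_{kl}=\widetilde N^{kl}_{kl}-c$ and $N^{kl}_{lk}=\widetilde N^{kl}_{lk}$, so \eqref{r1} gives the sum $c-c=0$. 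This is the same extra input the paper uses, packaged inside \eqref{eq3.3}; note that it is tied to $A^{*},B^{*}$ satisfying the same propagation relation (via Proposition \ref{prop 3.1}) rather than to the bare list of properties \eqref{Np}, so strictly speaking both your proof and the paper's read the hypothesis \eqref{eq.AB} as ``the situation characterized by Proposition \ref{prop 3.1}'' and not as ``an arbitrary $N$ satisfying \eqref{Np}''.
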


\begin{proof}
We have to check that $A^{*}$ and $B^{*}$ define the same Stokes differential operator 
for divergence-free vector fields. Indeed the Fourier symbol of the operator 
$$
u = (u_k)_{1\leq k\leq d} \ \to \ \left( 
-\frac{\partial}{\partial x_\beta} \left((A^{*}-B^{*})^{kl}_{\alpha \beta} \frac{\partial u_k}{\partial x_\alpha} \right) \right)_{1\leq l \leq d}
$$
is $(A^{*}-B^{*})^{kl}_{\alpha \beta}\xi_\alpha\xi_\beta$ which, by virtue of \eqref{eq3.3}, is equal to $c\xi_k\xi_l$ 
which is precisely the symbol of the operator $u \to -c \nabla (\nabla\cdot u)$ which vanishes 
on the space of divergence free functions. 
\end{proof}

\section{Homogenization result}\label{sec5}
\setcounter{equation}{0} 

This section is devoted to a proof of Theorem \ref{thm1.1}, 
our main homogenization result stated in the first section. 
It is based on the tools  that   we have introduced so far. 
A similar  proof is given for the linear elasticity problem in \cite{GV2}. 
However, the presence of a pressure and a differential constraint 
in the Stokes system seriously complexifies the analysis 
and has a non-trivial effect in the homogenization process.  
Besides, we also bring some simplifications to the proof given in \cite{GV2}.\\

We consider  a sequence of  solutions  $(u^{\epsilon},p^{\epsilon})\in (H^1_{0}(\Omega))^d\times L^2_0 (\Omega) $ solving  the Stokes system \eqref{eq1}.   It is classical to derive the following bound:  
\begin{equation}\label{estimate-u-p}
||u^\epsilon||_{(H^1_{0}(\Omega))^d} + ||p^\epsilon||_{L^2(\Omega)} \leq C ||f||_{(L^2(\Omega))^d}, 
\end{equation} 
where $C$ is  independent of $\epsilon.$ 
Then there exist $(u,p)\in (H^1_{0}(\Omega))^d\times L^2_0 (\Omega) \ $ and a subsequence ($u^{\epsilon}$,  $p^{\epsilon}$) converging weakly 
to  $(u, p)$ in $(H^1_{0}(\Omega))^d  \times L^2_0(\Omega)$. Our aim is to show that $(u,p)$
satisfies the homogenized Stokes system \eqref{eq9}. Due to the uniqueness of solutions for the system \eqref{eq9}, it follows that the entire sequence  $(u^{\epsilon},p^{\epsilon})$ converges  to $ (u,p) $ weakly in $(H^1_0 (\Omega))^d \times L^2_0 (\Omega).$ \\

There are several steps in the proof. 
First, we localize the Stokes system \eqref{eq1} by applying a cut-off function technique 
to the velocity $u$ in order to get the equation \eqref{local} in the whole $\mathbb{R}^d$. 
Next, by taking  the Bloch transformation $B_{m,\hat{\eta}}^{\epsilon}$ $(1\leq m \leq d-1 )$ 
of the equation \eqref{local} and passing to the limit,  we arrive at  the 
homogenized equation in the Fourier space. 
Finally, we take the inverse Fourier transform to go back to the  physical space which gives our desired result. \\

\noindent{\bf Notation:} in the sequel L.H.S. stands for left hand side, 
and R.H.S. for right hand side. 

\paragraph{Step 1. Localization of the velocity $u$ : }
Let $v \in \mathcal{D}(\Omega)$ be arbitrary.
Then $vu^{\epsilon}$ and $p^{\epsilon}$ satisfy (for $l=1, \ldots , d)$ 
\begin{equation}\label{local} 
-\frac{\partial}{\partial x_\alpha}(\mu^{\epsilon}\frac{\partial}{\partial x_\alpha})(vu^{\epsilon}_l) +  \frac{\partial p^{\epsilon}}{\partial x_l}v = vf_l + g^{\epsilon}_l + h^{\epsilon}_l\quad\mbox{in }\mathbb{R}^d,   
\end{equation}
where,
\begin{equation}
g_l^{\epsilon} = -2\mu^{\epsilon}\frac{\partial u^{\epsilon}_l}{\partial x_\alpha}\frac{\partial v}{\partial x_\alpha} - \mu^{\epsilon}\frac{\partial^2v}{\partial x_\alpha \partial x_\alpha}u^{\epsilon}_l
\quad\mbox{and}\quad h_l^{\epsilon}  = -\frac{\partial \mu^{\epsilon}}{\partial x_\alpha}\frac{\partial v}{\partial x_\alpha}u^{\epsilon}_l.
\end{equation}
Note that, $g_l^{\epsilon}$ and $h_l^{\epsilon}$
correspond to terms containing zero and first order derivatives of $\mu^{\epsilon}$ respectively. In the sequel, we extend $u^\epsilon$ and $p^\epsilon$ by  zero outside $\Omega$ and such extensions are denoted by the same letters.

\paragraph{Step 2. Limit of $B_{m,\hat{\eta}}^{\epsilon}$ applied to the L.H.S. of \eqref{local} :}
We consider the following $\epsilon$-scaled spectral problem of \eqref{eq3.1} as follows : Let
$\hat{\eta}= \frac{\xi}{|\xi|}  \in \mathbb{S}^{d-1}$, $\delta = \epsilon(\xi \cdot\hat{\eta})$;   
\begin{equation*}\begin{aligned}
\phi_{m,\hat{\eta}}^{\epsilon}(x;\delta) &= \phi_{m,\hat{\eta}}(\frac{x}{\epsilon};\epsilon(\xi\cdot\hat{\eta})), \ \mbox{and}\ \lambda_{m,\hat{\eta}}^{\epsilon}(\delta) = \epsilon^{-2}\lambda_{m, \hat{\eta}}(\epsilon (\xi\cdot\hat{\eta}))\\[1mm]
 q_{m,\hat{\eta}}^{\epsilon}(x;\delta) &= \epsilon^{-1}q_{m,\hat{\eta}}(\frac{x}{\epsilon};\epsilon(\xi\cdot\hat{\eta})), \ \mbox{and}\ q^\epsilon_{0,m,\hat{\eta}}(\delta) = \epsilon^{-2}q_{0,m,\hat{\eta}}(\epsilon(\xi\cdot\hat{\eta})).\\[1mm]
\end{aligned}\end{equation*}
They satisfy the following system because of \eqref{eq3.1} :
\begin{equation} \left. \begin{array}{rllllll}
\displaystyle{ -D (\delta \hat{\eta}) \cdot  (\mu^\epsilon(x)   D (\delta \hat{\eta} )\phi^\epsilon_{m,\hat{\eta}}(x;\delta))  +  D  (\delta \hat{\eta} ) q^\epsilon_{m,\hat{\eta}}(x;\delta)  +  q^\epsilon_{0,m,\hat{\eta}}(\delta)\hat{\eta}} &=& \displaystyle{  \lambda^\epsilon_{m,\hat{\eta}}(\delta)  \phi^\epsilon_{m,\hat{\eta}}(x;\delta)  \mbox{   in } \mathbb{R}^d ,} \\[2mm]
\displaystyle{  D (\delta \hat{\eta}) \cdot \phi^\epsilon_{m,
\hat{\eta}}(x;\delta) } &=&  0  \mbox{  in }  \mathbb{R}^d , \\  
\displaystyle{ \hat{\eta} \cdot \int\limits_{\mathbb{R}^d}  \phi^\epsilon_{m,\hat{\eta}}(x;\delta)  dx  }&=& 0, \\
 (\phi^\epsilon_{m,\hat{\eta}}, q^\epsilon_{m,\hat{\eta}}
 )  \mbox{ is } \epsilon Y &-& \mbox{ periodic,} \\[2mm]
\displaystyle{ \int\limits_{\epsilon\mathbb{T}^d} |\phi^\epsilon_{m,\hat{\eta}}(x;\delta)|^2  dx } &=& 1.
\end{array} \right\} \label{epsilon-spectral}
\end{equation}
Let us first consider the L.H.S. of \eqref{local}.
For $g\in H^1(\mathbb{R}^d)^d$ with compact support  in $\Omega$,   using  the definition Bloch transformation \eqref{bloch-transformation2} and spectral equation \eqref{epsilon-spectral},  we obtain for $m= 1, \ldots d-1,$
\begin{equation*}\begin{aligned}
B_{m,\hat{\eta}}^{\epsilon}\lb-\frac{\partial}{\partial x_\alpha}(\mu^{\epsilon}\frac{\partial}{\partial x_\alpha})g\rb(\xi) &= \left\langle e^{i x\cdot\xi}\phi_{m,\hat{\eta}}^{\epsilon}(.;\delta),    -\frac{\partial}{\partial x_\alpha}(\mu^{\epsilon}\frac{\partial}{\partial x_\alpha})g\right\rangle\\[3mm]
&= \left\langle g, -\frac{\partial}{\partial x_\alpha}(\mu^{\epsilon}\frac{\partial}{\partial x_\alpha})(e^{i x\cdot\xi}\phi_{m,\hat{\eta}}^{\epsilon}(.;\delta))\right\rangle\\[3mm]
&= \left\langle g, \lambda_{m,\hat{\eta}}^{\epsilon} (\delta) e^{i x\cdot\xi}\phi_{m,\hat{\eta}}^{\epsilon}(.;\xi)-\nabla( q_{m,\hat{\eta}}^{\epsilon}(.;\delta)e^{i x\cdot\xi})-q_{0,m,\hat{\eta}}^\epsilon(\delta)\hat{\eta} e^{ix\cdot\xi}\right\rangle\\[3mm]
&= \lambda_{m,\hat{\eta}}^{\epsilon} (\delta) B_{m,\hat{\eta}}^{\epsilon}g(\xi)- \left\langle g, \nabla(q_{m,\hat{\eta}}^{\epsilon}(.;\delta)e^{i x\cdot\xi})\right\rangle - \left\langle g, q^\epsilon_{0,m,\hat{\eta}}(\delta)\hat{\eta}e^{ix\cdot\xi}\right\rangle.\\[1mm]
\end{aligned}\end{equation*} 
In the previous equation the duality bracket is between $H^1_{comp}(\mathbb{R}^d)^d$ and $H^{-1}_{loc}(\mathbb{R}^d)^d$. 

Therefore, $B_{m,\hat{\eta}}^{\epsilon}$ applied to the L.H.S. of \eqref{local} ($1\leq m \leq d-1$) is equal to
\begin{equation}\label{bt-lhs}
\lambda_{m,\hat{\eta}}^{\epsilon} (\delta) B_{m,\hat{\eta}}^{\epsilon}(vu^{\epsilon})(\xi)- \left\langle vu^\epsilon, \nabla(q_{m,\hat{\eta}}^{\epsilon}(.;\delta)e^{i x\cdot\xi})\right\rangle - \left\langle vu^\epsilon, q^\epsilon_{0,m,\hat{\eta}}(\delta)\hat{\eta}e^{ix\cdot\xi}\right\rangle + B_{m,\hat{\eta}}^{\epsilon}(v\nabla p^{\epsilon}) (\xi) . 
\end{equation}
Below, we treat each term of  \eqref{bt-lhs} one by one.\\

\noindent{\bf 1st term of \eqref{bt-lhs} : } 
By using the Taylor expansion 
\begin{equation}
 \lambda^\epsilon_{m,\hat{\eta}}(\delta) = \epsilon^{-2}\lambda_{m,\hat{\eta}}(\epsilon(\xi\cdot\hat{\eta})) = \frac{1}{2}\lambda^{\prime\prime}_{m,\hat{\eta}}(0)(\xi\cdot\hat{\eta})^{2} + \mathcal{O}(\epsilon(\xi\cdot\hat{\eta}))
\end{equation}
and then using Theorem \ref{thm2.2}, we get
\begin{equation}
\chi_{\epsilon^{-1}\mathbb{T}^d}(\xi)\lambda_{m,\hat{\eta}}^{\epsilon}(\delta)B_{m,\hat{\eta}}^{\epsilon}(vu^{\epsilon})(\xi) \rightarrow \frac{1}{2}\lambda_{m,\hat{\eta}}^{\prime\prime}(0)(\xi \cdot \hat{\eta})^2\phi_{m,\hat{\eta}}^{0}\cdot\widehat{(vu)}(\xi)\quad\mbox{in }L^2_{loc}(\mathbb{R}_{\xi}^d)\mbox{ strongly,}
\end{equation}
where we recall that    $\phi_{m,\hat{\eta}}^{0}$ is a constant unit vector of $\mathbb{R}^d$ orthogonal to $\hat{\eta}$.
Note that $\lambda_{m,\hat{\eta}}^{\prime\prime}(0)$ is linked to $A^{*}$ via the propagation relation \eqref{eq3.10}.  Using this  relation, the above limit can be written  as 
\begin{align}
&(\xi \cdot \hat{\eta})^2\lb\frac{1}{2}q^{\prime\prime}_{0,m,\hat{\eta}}(0)\hat{\eta} +  M (\hat{\eta}, A^\ast ) \phi_{m,\hat{\eta}}^{0}\rb\cdot\widehat{(vu)}(\xi)\notag\\[2mm]
&= (\xi \cdot \hat{\eta})^2\frac{1}{2}q^{\prime\prime}_{0,m,\hat{\eta}}(0) \hat{\eta}_k\widehat{(vu_k)} + (\xi \cdot \hat{\eta})^2 (A^{\ast})^{ kl}_{\alpha \beta} 
\hat{\eta}_{\alpha} \hat{\eta}_\beta ( \phi^0_{m, \hat{\eta}})_l  (\widehat{v u_k})(\xi).
\end{align}
\noindent{\bf 2nd term of \eqref{bt-lhs} : } 
\begin{align}\label{q}
-\left\langle vu^{\epsilon},\nabla(q_{m,\hat{\eta}}^{\epsilon}e^{i x\cdot\xi})\right\rangle
&= \left\langle \nabla \cdot (vu^{\epsilon}),e^{i x\cdot\xi}q_{m,\hat{\eta}}^{\epsilon}\right\rangle \notag\\[2mm]
&= \left\langle u^{\epsilon} \cdot \nabla v ,e^{i x\cdot\xi}q_{m,\hat{\eta}}^{\epsilon}\right\rangle \quad\mbox{ (as }\nabla\cdot u^{\epsilon} = 0). 
\end{align}
Using  the Taylor expansion of $q^\epsilon_{m,\hat{\eta}}(.;\delta)$ :
\begin{align}
q_{m,\hat{\eta}}^{\epsilon}(x;\delta) &= \epsilon^{-1}q_{m,\hat{\eta}}(\frac{x}{\epsilon};\epsilon(\xi\cdot\hat{\eta}))\notag\\[2mm]
                       &=\epsilon^{-1} q_{m,\hat{\eta}}(\frac{x}{\epsilon};0) + (\xi\cdot\hat{\eta})q^{\prime}_{m,\hat{\eta}}(\frac{x}{\epsilon};0) +\mathcal{O}(\epsilon(\xi\cdot\hat{\eta})^2),  
\end{align}
(prime denotes the derivative with respect to the second variable),
with the properties that (cf. Theorem \ref{thm2.1}) 
\begin{equation}\begin{aligned}
& q_{m,\hat{\eta}}(\frac{x}{\epsilon};0) =0 \ \mbox{ and }\\[1mm] 
& q_{m,\hat{\eta}}^{\prime}(\frac{x}{\epsilon};0) \rightharpoonup M_{\mathbb{T}^d}(q_{m,\hat{\eta}}^{\prime}(y;0)) =0\  \mbox{ weakly in }L^2(\mathbb{R}^d); \ \mbox{ (as }q^\prime_{m,\hat{\eta}}(y;0)\in L^2_0(\mathbb{T}^d))   
\end{aligned}\end{equation}  
where,  $M_{\mathbb{T}^d}(f) = \frac{1}{|\mathbb{T}^d|}\int_{\mathbb{T}^d} f(y)dy$.\\
\\
Then by using $u^\epsilon\rightarrow u $ strongly in $L^2(\Omega)^d$ from \eqref{q} we get
\begin{equation}
-\left\langle vu^{\epsilon},\nabla(q_{m,\hat{\eta}}^{\epsilon}e^{i x\cdot\xi})\right\rangle
\rightarrow \left\langle u\cdot\nabla v, e^{ix\cdot\xi}M_{\mathbb{T}^d}(q^\prime_{m,\hat{\eta}})\right\rangle =0\  \mbox{ in  } L^2_{loc} (\mathbb{R}^d_\xi) \mbox{ strongly.}\\[1mm]
 \end{equation}
It is also used that, the error term $\mathcal{O}(\epsilon(\xi\cdot\hat{\eta})^2)$  in the above Taylor expansion tends to $0$ in the space $L^2_{loc} 
 (\mathbb{R}^d_{\xi}; L^2_{loc} (\mathbb{R}^d))$. Thus the oscillating eigen-pressure $q^\epsilon_{m,\hat{\eta}}$ does not contribute to the homogenized system. \\

\noindent{\bf 3rd term of \eqref{bt-lhs} : }  
We use the Taylor expression of $q_{0,m,\hat{\eta}}^\epsilon(\xi)$ with the property $q_{0,m,\hat{\eta}}(0)=q^{\prime}_{0,m,\hat{\eta}}(0)=0$ (cf. Theorem \ref{thm3.1}) to have
\begin{equation}
 q_{0,m,\hat{\eta}}^\epsilon(\delta) = \epsilon^{-2}q_{0,m,\hat{\eta}}(\epsilon(\xi\cdot\hat{\eta})) = \frac{1}{2}q^{\prime\prime}_{0,m,\hat{\eta}}(0)+\mathcal{O}(\epsilon(\xi\cdot\hat{\eta})^2).
\end{equation}
So,
\begin{align}
 - \left\langle vu^\epsilon, q^\epsilon_{0,m,\hat{\eta}}(\delta)e^{ix\cdot\xi}\hat{\eta}\right\rangle \rightarrow & -\langle vu, \frac{1}{2}q^{\prime\prime}_{0,m,\hat{\eta}}(0)(\xi\cdot\hat{\eta})^2\hat{\eta}e^{ix\cdot\xi}\rangle 
 \ \mbox{ in  } L^2_{loc} (\mathbb{R}^d_\xi) \mbox{ strongly.}\notag\\[1mm]
  &= -  \frac{1}{2}q^{\prime\prime}_{0,m,\hat{\eta}}(0)(\xi\cdot\hat{\eta})^2\widehat{(vu)}\cdot\hat{\eta}.
\end{align} 
 
\noindent{\bf 4th term of \eqref{bt-lhs} : }  Finally, we consider the remaining fourth term in \eqref{bt-lhs}, and doing integration by parts we get 
\begin{align}\label{pr1}
 B^\epsilon_{m, \hat{\eta}}(v \nabla p^\epsilon)(\xi) &= \left\langle v\nabla p^{\epsilon},e^{i x\cdot\xi}\phi_{m,\hat{\eta}}^{\epsilon}\right\rangle\notag\\[2mm]
 &=-\left\langle  p^{\epsilon}, \nabla v \cdot e^{i x\cdot\xi}\phi_{m,\hat{\eta}}^{\epsilon}\right\rangle\quad(\mbox{as }\nabla\cdot (e^{i x\cdot\xi}\phi_{m,\hat{\eta}}^\epsilon) = 0).
\end{align}
We use the Taylor expansion
\begin{align}
\phi_{m,\hat{\eta}}^{\epsilon}(x;\xi) &= \phi_{m,\hat{\eta}}(\frac{x}{\epsilon};0) + \epsilon(\xi\cdot\hat{\eta})\phi^{\prime}_m(\frac{x}{\epsilon};0) + \mathcal{O}((\epsilon(\xi\cdot\hat{\eta}))^2)\notag\\
& = \phi_{m,\hat{\eta}}^{0}  + \epsilon(\xi\cdot\hat{\eta})\phi^{\prime}_{m,\hat{\eta}}(\frac{x}{\epsilon};0) + \mathcal{O}((\epsilon(\xi\cdot\hat{\eta}))^2)\rightarrow  \phi_{m,\hat{\eta}}^{0}  \quad\mbox{in }L^2_{loc}(\mathbb{R}^d_{\xi}, (L^2(\Omega))^d)  \mbox{ strongly.}
\end{align}
And from \eqref{estimate-u-p} as $||p^\epsilon||_{L^2(\Omega)}$ is uniformly bounded, so up to a subsequence we have
\begin{equation}
p^{\epsilon} \rightharpoonup p \mbox{ in } L^2(\Omega).
\end{equation}
Thus by passing to the limit in the R.H.S. of \eqref{pr1}, we get
\begin{align} 
-\left\langle p^{\epsilon}, \nabla v\cdot e^{i x\cdot\xi}\phi_{m,\hat{\eta}}^{\epsilon} \right\rangle \rightarrow & -\left\langle p,\nabla v\cdot e^{i x\cdot\xi}\phi_{m,\hat{\eta}}^{0}\notag \right\rangle\\[2mm]
&= \left\langle \nabla p, ve^{i x\cdot\xi}\phi_{m,\hat{\eta}}^{0}   \right\rangle \quad(\mbox{as } \nabla\cdot (e^{ix\cdot\xi}\phi^0_{m,\hat{\eta}}) =0).
\end{align}
Thus
\begin{equation}
 \chi_{\epsilon^{-1}\mathbb{T}^d}B^\epsilon_{m ,\hat{\eta}} (v \nabla p^\epsilon)  (\xi)   \rightarrow   \phi_{m,\hat{\eta}}^0\cdot \widehat{(v\nabla p)}(\xi)  \quad \mbox{ in }  L^2_{loc} (\mathbb{R}^d_\xi ) \mbox{ strongly.}\\[2mm]
\end{equation}
This property proved for $H^{-1}$ elements is analogous to Theorem \ref{thm2.1}. 

\paragraph{Summary so far : }  Combining the previous results, therefore, 
by taking the Bloch transformation $B_{m,\hat{\eta}}^{\epsilon}$ of the L.H.S. 
of \eqref{local} ($1\leq m \leq d-1$) and  multiplying by $\chi_{\epsilon^{-1}\mathbb{T}^d}$, 
we see that it converges to  
\begin{equation}\label{limit-lhs}
{(A^{*})}^{kl}_{\alpha\beta}\hat{\eta}_\alpha\hat{\eta}_\beta (\xi\cdot\hat{\eta})^2 (\phi_{m,\hat{\eta}}^{0})_l \widehat{(vu_k)}(\xi)
+ \phi_{m,\hat{\eta}}^0  \cdot \widehat{(v\nabla p)}(\xi) 
\quad\mbox{ in $L^2_{loc}(\mathbb{R}^d_{\xi})$  strongly. }
\end{equation}

\paragraph{Step 3. Limit of $B_{m,\hat{\eta}}^{\epsilon}$ applied to the R.H.S. of \eqref{local} : }
Applying
$B_{m,\hat{\eta}}^{\epsilon}$ to the R.H.S. of \eqref{local} ($1\leq m \leq d-1$ ), we obtain 
\begin{equation}\label{bt-rhs}
B_{m,\hat{\eta}}^{\epsilon}(vf ) (\xi ) +B_{m,\hat{\eta}}^{\epsilon} (g^\epsilon) (\xi)   
+B_{m,\hat{\eta}}^{\epsilon} (h^\epsilon)   (\xi). 
\end{equation}
We treat below each of these terms separately.
Passing to the limit in the first term is straightforward (cf. Corollary \ref{corol}) and we obtain
\begin{equation}
\chi_{\epsilon^{-1}\mathbb{T}^d}(\xi) B_{m,\hat{\eta}}^{\epsilon}(vf ) (\xi ) \rightarrow \phi_{m,\hat{\eta}}^{0}\cdot \widehat{(vf)} \mbox{ in } L^2_{loc}(\mathbb{R}^d_{\xi}) \mbox{ strongly.}
\end{equation}

\noindent{\bf Limit of $B^{\epsilon}_{m,\hat{\eta}}(g^{\epsilon})$ : } 
We pose  $\sigma^{\epsilon}= \mu^{\epsilon}\nabla u^{\epsilon}$ ($\sigma^\epsilon_{l\alpha} = \mu^\epsilon\frac{\partial u^\epsilon_l}{\partial x_\alpha}$) which is a bounded matrix in  $(L^2(\Omega))^{d \times d}$ and so  there exists a weakly convergent subsequence in $(L^2(\Omega))^{d \times d }$. Let $\sigma$ be its limit as well as its extension by zero outside $\Omega$.
Then via Theorem \ref{thm2.2}, 
\begin{equation}
\chi_{\epsilon^{-1}\mathbb{T}^d}(\xi)B_{m,\hat{\eta}}^{\epsilon}(\sigma^{\epsilon}_{l\alpha} \frac{\partial v}{\partial x_\alpha}) (\xi)  \rightharpoonup \widehat{(\sigma_{l\alpha}\frac{\partial v}{\partial x_\alpha})}(\xi) (\phi^0_{m, \hat{\eta}})_l  \quad\mbox{in }L^2_{loc}(\mathbb{R}_{\xi}^d) \mbox{ weakly.}
\end{equation}
Due to the strong convergence of $u^\epsilon$ in $L^2 (\mathbb{R}^d)^d,$ (cf. Corollary \ref{corol}) we have 
\begin{equation}
\chi_{\epsilon^{-1}\mathbb{T}^d}(\xi)B_{m,\hat{\eta}}^{\epsilon}(\mu^{\epsilon}\Delta v u^{\epsilon}) (\xi)  \rightharpoonup M_{\mathbb{T}^d}(\mu(y))\widehat{(\Delta v u)}(\xi) \cdot \phi^0_{m, 
\hat{\eta}} \quad\mbox{in }L^2_{loc}(\mathbb{R}_{\xi}^d)\mbox{ weakly}.\\[2mm] 
\end{equation}
Combining the above two convergence results and doing integration by parts, we obtain 
\begin{equation}
\chi_{\epsilon^{-1}\mathbb{T}^d}(\xi)B_{m,\hat{\eta}}^{\epsilon}(g^{\epsilon}) (\xi) \rightharpoonup -2\widehat{(\sigma \nabla v)}(\xi) \cdot \phi ^0_{m, \hat{\eta}} - M_{\mathbb{T}^d}(\mu(y))\widehat{(\Delta v u)}(\xi) \cdot \phi^0_{m, 
\hat{\eta}}\quad \mbox{ in } L^2_{loc} ( \mathbb{R}^d_\xi) \mbox{ weakly.}\\[3mm]
\end{equation}

\noindent{\bf Limit of $B^{\epsilon}_{m,\hat{\eta}}(h^{\epsilon})$ : } 
We decompose it into  two terms: 
\begin{equation}\begin{aligned}\label{decompo}
&B^{\epsilon}_{m,\hat{\eta}}(h^{\epsilon}) = - B_{m,\hat{\eta}}^{\epsilon}((\nabla \mu^{\epsilon}\cdot\nabla v) u^{\epsilon}) (\xi)\\
&\qquad = - \left\langle (\nabla \mu^{\epsilon}\cdot\nabla v) u^{\epsilon}, e^{i x\cdot\xi}\phi_{m,\hat{\eta}}^0 \right\rangle
  -  \left\langle (\nabla \mu^{\epsilon}\cdot\nabla v) u^{\epsilon}, e^{i x\cdot\xi}\epsilon(\xi\cdot\hat{\eta})\phi^{\prime}_m(\frac{x}{\epsilon};0) + \mathcal{O}((\epsilon(\xi\cdot\hat{\eta}))^2) \right\rangle. 
\end{aligned}\end{equation}
We start with the second term. By doing integration by parts, it becomes    
\begin{equation}\label{abv1}
(\xi\cdot\hat{\eta}) \int_{\mathbb{R}^d} e^{-i x\cdot\xi}\lb \mu^{\epsilon} 
\nabla _y\overline{\phi^\prime}_m(\frac{x}{\epsilon};0 ) u^\epsilon\rb \cdot \nabla v dx  + \mathcal{O}(\epsilon(\xi\cdot\hat{\eta})).
\end{equation}
Thanks to the strong convergence of $u^\epsilon$ in $L^2 (\mathbb{R}^d)^d$, 
the above quantity  converges in $L^2_{loc}(\mathbb{R}^d_\xi)$ strongly to
\begin{equation}\label{2nd-term}
  (\xi \cdot \hat{\eta})\int_{\mathbb{R}^d} e^{-ix\cdot\xi} \lb M_{\mathbb{T}^d}\lb \mu(y)\nabla_y\overline{\phi^\prime}_m(y;0)\rb u\rb\cdot \nabla v\ dx.
\end{equation}
Next, we  consider the first term of the R.H.S. of \eqref{decompo}.  After doing integration by parts, one has
\begin{equation}\label{above}
\int_{\mathbb{R}^d}e^{-i x\cdot\xi} \left[\mu^\epsilon\Delta v\ (u^{\epsilon}\cdot \phi_{m,\hat{\eta}}^0) + \lb(\mu^\epsilon\nabla u^\epsilon)\phi^0_{m,\hat{\eta}}\rb \cdot \nabla v  -  i\mu^\epsilon\lb(\phi_{m,\hat{\eta}}^0\otimes\xi)
u^\epsilon\rb\cdot\nabla v \right] dx.\\[2mm]  
\end{equation}
In a manner similar to the above arguments, the limit of \eqref{above} would be
\begin{equation}\label{1st-term}
\int_{\mathbb{R}^d}e^{-i x\cdot\xi} \left[M_{\mathbb{T}^d}(\mu(y))\Delta v\ (u\cdot \phi_{m,\hat{\eta}}^0) + \lb\sigma \phi_{m,\hat{\eta}}^0 \rb\cdot \nabla v -  iM_{\mathbb{T}^d}(\mu(y))\lb(\phi_{m,\hat{\eta}}^0\otimes\xi)u\rb\cdot\nabla v \right] dx.\\[2mm]
\end{equation}
Now combining \eqref{2nd-term} and \eqref{1st-term} and using the fact 
$$
\phi^{\prime}_m(y;0) -i\hat{\eta}_\beta\chi^l_\beta(y))(\phi_{m,\hat{\eta}}^0)_l
$$
is a constant vector of $\mathbb{C}^3$ independent of $y$, which in turn implies that 
$$ \nabla_y \phi^{\prime}_m(y;0) =i\hat{\eta}_\beta \nabla_y \chi^l_\beta(y)(\phi_{m,\hat{\eta}}^0)_l,
$$
we see that $\chi_{{\epsilon}^{-1}\mathbb{T}^{d}}B_{m,\hat{\eta}}^{\epsilon}(h^\epsilon)(\xi)$ converges 
strongly in $L^2_{loc}(\mathbb{R}^d_\xi)$ to
\begin{align}
&-i(\xi\cdot\hat{\eta})\left[ M_{\mathbb{T}^d} \lb \mu(y)  \hat{\eta}_\beta \nabla_y\chi^l_\beta(y)(\phi_{m,\hat{\eta}}^0)_l\rb  \right]_{k\alpha}\widehat{(\frac{\partial v}{\partial x_\alpha}u_k)}(\xi)\notag\\[2mm]
& +  M_{\mathbb{T}^d}(\mu(y))\widehat{(\Delta v\ u_k)}(\xi)(\phi_{m,\hat{\eta}}^0)_k  + \widehat{(\sigma_{l\beta}\frac{\partial v}{\partial x_\beta})}(\xi) (\phi_{m,\hat{\eta}}^0)_l - i M_{\mathbb{T}^d}(\mu(y))
(\phi_{m,\hat{\eta}}^0)_k\xi_\alpha \widehat{(\frac{\partial v}{\partial x_\alpha}u_k)}(\xi).
\end{align}

\paragraph{Step 4. Limit of $B_{m,\hat{\eta}}^{\epsilon}$ applied to \eqref{local} : } 
By equating the limiting identities that we have derived in the last two steps, we obtain 
\begin{align}\label{local-fourier}
&{(A^{*})}^{kl}_{\alpha\beta}\xi_\alpha\xi_\beta \widehat{(vu_k)}(\xi) (\phi_{m,\hat{\eta}}^{0})_l  + \widehat{(v\frac{\partial p}{\partial x_l})}(\xi)(\phi_{m,\hat{\eta}}^0)_l\notag\\[2mm]
&= \widehat{(vf_l)}(\xi)(\phi_{m,\hat{\eta}}^0)_l  -2\widehat{(\sigma_{ l\beta}\frac{\partial v}{\partial x_\beta})}(\xi)(\phi_{m,\hat{\eta}}^0)_l - M_{\mathbb{T}^d}(\mu(y))\widehat{(\Delta v\ u_k)}(\xi)(\phi_{m,\hat{\eta}}^0)_k \notag\\[2mm]
&\quad -i\left[ M_{\mathbb{T}^d} \lb \mu(y)\nabla_y\chi^l_\beta(y)\rb\right]_{k\alpha} (\phi_{m,\hat{\eta}}^0)_l \xi_\beta  \widehat{(\frac{\partial v}{\partial x_\alpha}u_k)}(\xi)+  M_{\mathbb{T}^d}(\mu(y))\widehat{(\Delta v\ u_k)}(\xi) (\phi_{m,\hat{\eta}}^0)_k\notag\\[2mm]
&\quad+ \widehat{(\sigma_{l\beta}\frac{\partial v}{\partial x_\beta})}(\xi) (\phi_{m,\hat{\eta}}^0)_l - i M_{\mathbb{T}^d}(\mu(y))\delta_{\alpha \beta}\delta_{l k}
(\phi_{m,\hat{\eta}}^0)_l\xi_\beta \widehat{(\frac{\partial v}{\partial x_\alpha}u_k)}(\xi).
\end{align}
The above equation has to be considered as the localized homogenized equation in the Fourier space. 
The conclusion of Theorem \ref{thm1.1} will follow as a consequence of this equation.

\paragraph{Step 5. Passage from Fourier space ($\xi)$ to physical space $(x)$ : }
We note that the L.H.S. and the R.H.S. of \eqref{local-fourier} can be written 
as $L(\xi)\cdot\phi_{m,\hat{\eta}}^0$ 
and $R(\xi)\cdot\phi_{m,\hat{\eta}}^0$, respectively, so that we have 
$$
\left[L(\xi)-R(\xi)\right]\cdot\phi_{m,\hat{\eta}}^0 =0\ \mbox{ for }m=1,..,(d-1).
$$
Observe that, the quantity $ \left[L(\xi)-R(\xi)\right]$ is independent of $m$. Varying $m=1,\ldots,(d-1)$ and using the fact $\xi \perp \phi_{m,\hat{\eta}}^0, \xi\in\mathbb{R}^d$  and $\{\phi^0_{1, \hat{\eta}} \cdots \phi^0_{d-1, \hat{\eta}}\}$  forms a basis of $\mathbb{R}^{d-1}$, we get
$$
\left[L(\xi)-R(\xi)\right] =c(\xi)\xi \quad\mbox{ for some scalar }c(\xi). $$
Therefore, for all test functions $w \in (L^2(\mathbb{R}^d))^d$ satisfying $\xi\cdot\hat{w}(\xi)=0$ (i.e. $\dive w =0 $ in $\mathbb{R}^d$ )
we also have
$$
\left[L(\xi)-R(\xi)\right]\cdot\hat{w}(\xi) =0. 
$$
Now by using the Plancherel's theorem, we have
\begin{equation}\label{Parseval}
\int_{\mathbb{R}^d} \mathcal{F}^{-1}\left[L(\xi)-R(\xi)\right](x)\cdot\overline{w}(x) dx =0,  \quad \forall w\in (L^2(\mathbb{R}^d))^d\mbox{ satisfying } \dive w =0
\end{equation}
where $\mathcal{F}^{-1}$ denotes the inverse Fourier transformation. 

We easily compute $I (x) = \mathcal{F}^{-1}\left[L(\xi)-R(\xi)\right](x)$ to obtain 
$$
I_l (x)  = \lb -{(A^{*})}^{kl}_{\alpha\beta}\frac{\partial^2 (vu_k)}{\partial x_\beta\partial x_\alpha} +  v\frac{\partial p}{\partial x_l}\rb- \lb vf_l - \sigma_{l, \beta} \frac{\partial v}{\partial x_\beta} - {(A^{*})}^{kl}_{\alpha\beta}\frac{\partial}{\partial x_\beta}(\frac{\partial v}{\partial x_\alpha} u_k ) \rb \quad\mbox{ in }\mathbb{R}^d,
$$
which simplifies in 
$$
I_{l} = \lb -{(A^{*})}^{kl}_{\alpha\beta}\frac{\partial^2 u_k}{\partial x_\beta\partial x_\alpha} +  \frac{\partial p}{\partial x_l} - f_l \rb v 
- \lb {(A^{*})}^{kl}_{\alpha\beta}\frac{\partial u_k}{\partial x_\alpha} - \sigma_{l, \beta}  \rb \frac{\partial v}{\partial x_\beta} \quad\mbox{in }\mathbb{R}^d.\\[2mm]
$$
We pose 
\begin{equation}\label{F1F2}
F^1_l =  \lb -{(A^{*})}^{kl}_{\alpha\beta}\frac{\partial^2 u_k}{\partial x_\beta\partial x_\alpha} +  \frac{\partial p}{\partial x_l} - f_l \rb \mbox{ and } F^2_{l\beta}= F^2_{\beta l}=
- \lb {(A^{*})}^{kl}_{\alpha\beta}\frac{\partial u_k}{\partial x_\alpha} - \sigma_{l, \beta}  \rb 
\end{equation}
to write $I_{l}$ in the form 
$$
 I_{l} =  F^1_l\ v + F^2_{l\beta}\ \frac{\partial v}{\partial x_\beta}.
$$
Using \eqref{Parseval}, it follows from de Rham's theorem that $I$ is a gradient and furthermore this is true whatever be $v\in\mathcal{D}(\Omega)$. This imposes restriction on $F^1,F^2$. In fact, we show using \eqref{Parseval} that $F^2_{l\beta}= q\delta_{l\beta}$ and $F^1 = \nabla q$  for some scalar $q\in L^2(\Omega)$ so that $I=v\nabla q + q\nabla v = \nabla (vq)$. \\

\paragraph{Step 5A. To show $F^2_{l\beta} = q\delta_{l\beta}$ :}
Let us choose $v = v_0 e^{inx\cdot \omega}$, where $\omega$ is a unit vector in $\mathbb{R}^d$ and $v_0 \in \mathcal{D}(\Omega)$ is fixed.
Next, we choose $w = \psi_{\zeta,\omega} \in (L^2(\mathbb{R}^d))^d$ where for any two constant perpendicular vectors $\zeta$ and $\omega$ in $\mathbb{R}^d$, $\psi_{\zeta,\omega}\in (L^2(\mathbb{R}^d))^d$ solves 
\begin{equation}\label{div-equation} 
\dive \psi_{\zeta,\omega} = 0 \mbox{ in }\mathbb{R}^d\ \mbox{ with }\psi_{\zeta,\omega} = \zeta e^{-inx\cdot\omega} \mbox{ in }\Omega, \mbox{ where } \zeta\perp\omega. 
\end{equation}  
The existence of such a function $\psi_{\zeta,\omega}$ can be shown as follows. 
Let $R_0>0$ be such that $\overline{\Omega} \subset B(0,R_0)$ and consider the 
following boundary value problem 
\begin{equation}\label{div-problem} 
\begin{array}{ll}
\dive \psi_{\zeta,\omega} = 0 & \mbox{ in }B(0,R_0)\smallsetminus \overline{\Omega}, \\
\psi_{\zeta,\omega} = 0 & \mbox{ on }\partial B(0,R_0) , \\
\psi_{\zeta,\omega} = \zeta e^{-inx\cdot\omega} & \mbox{ on }\partial\Omega. 
\end{array}
\end{equation}
There exists a solution of \eqref{div-problem} (see \cite[Page No. 24]{GR}) since 
the boundary data satisfies the required compatibility condition (recall that 
we assume $\zeta\cdot \omega = 0$) 
$$
\int_{\partial\Omega} \zeta e^{-inx\cdot\omega}\cdot \nu \ d\sigma = \int_\Omega (\zeta\cdot\omega) e^{-inx\cdot\omega}\ dx = 0 .
$$
Then extending $\psi_{\zeta,\omega}$ by $0$ outside $B(0,R_0)$ and by $\zeta e^{-inx\cdot\omega}$ in $\Omega$, clearly the extended function $\psi_{\zeta,\omega}$ solves \eqref{div-equation}. \\
\\
Now using these $v$ and $w$ in \eqref{Parseval}, we have
$$
 \int_{\Omega} F^1_l\zeta_l\ v_0\ dx + \int_\Omega F^2_{l\beta}\frac{\partial v_0}{\partial x_\beta}\zeta_l\ dx +  n\int_\Omega F^2_{l\beta}\omega_\beta\zeta_l\ v_0\ dx  = 0 
$$
and dividing by $n$ and letting $n\rightarrow \infty$ in the above relation,  we get
\begin{equation}\label{F^2}
 \int_{\Omega} ( F^2\ \omega\cdot \zeta ) v_0 = 0.
\end{equation}
As $v_0\in \mathcal{D}(\Omega)$ is arbitrary, \eqref{F^2} gives $F^2\ \omega\cdot \zeta = 0$ 
in $\Omega$. As $F^2$ is symmetric, and further using that $\omega, \zeta$ are arbitrary satisfying $\omega\cdot\zeta=0$, we conclude $F^2_{l\beta} = F^2_{\beta l} = q\delta_{l\beta}$ for some scalar function $q\in L^2(\Omega)$. This means that we have the relation :
\begin{equation}\label{flux-1}
 \sigma_{l\beta} =  q\delta_{l\beta} + {(A^{*})}^{kl}_{\alpha\beta}\frac{\partial u_k}{\partial x_\alpha} .
\end{equation}

\paragraph{Step 5B. To show $F^1 =\nabla q$ :} 
We choose  $v\in\mathcal{D}(\Omega)$ and $w= \psi_{e_k,0}$ with $\psi_{e_k,0}$ 
as in \eqref{div-equation} with $\zeta=e_k$ and $\omega=0$. Then using these $v$ and $w$ in \eqref{Parseval} and using the conclusion from \textbf{Step 5A}, we have
$$
\int_\Omega F^1_k v + q\frac{\partial v}{\partial x_k}\ dx = 0 \mbox{ for all }v\in\mathcal{D}(\Omega) ,
$$
which implies $(F^1_k -\frac{\partial q}{\partial x_k}) =0 $ for $k=1,..,d$ or, $F^1=\nabla q$. 

\paragraph{Step 5C.}
Using \textbf{Step 5A} and \textbf{Step 5B} in \eqref{F1F2}, and considering the relation $F^1 -\nabla F^2 = 0$ in $\Omega$, we get the macro balance equation :
\begin{equation}\label{balance}
 -\frac{\partial\sigma_{l\beta}}{\partial x_\beta} + \frac{\partial p}{\partial x_l} = f_l \ \mbox{ in } \Omega,\ \ l =1,\ldots,d.
\end{equation}

\paragraph{Step 5D.}
In this step, we prove that $q=0$ in $\Omega$ by using the divergence-free condition. 
Indeed, as $\nabla\cdot u^\epsilon = 0$ in $\Omega$, we have 
\begin{equation*}
\sigma^\epsilon_{ll} =  \mu^\epsilon\frac{\partial u^\epsilon_l}{\partial x_l} = 0  \quad\mbox{ in } \Omega. 
\end{equation*}
Passing to the limit $\epsilon \rightarrow 0$, we get 
\begin{equation*}
\sigma_{ll} = 0 \quad\mbox{ in } \Omega.
\end{equation*}
Using this relation in \eqref{flux-1} with $\beta=l$, we get 
\begin{equation}\label{qd}
(A^{*})^{k l}_{\alpha l}\frac{\partial u_k}{\partial x_\alpha} + q d = 0. 
\end{equation}
On the other hand, from \eqref{eq7} and \eqref{eq8} we have
\begin{equation*}
(A^{*})^{kl}_{\alpha l}  = \frac{1}{|\mathbb{T}^d|} \int\limits_{\mathbb{T}^d} \mu (y )   \nabla (\chi^{k}_\alpha +y_\alpha e_k) 
 :  \nabla (y_l e_l)\ dy  = \frac{1}{|\mathbb{T}^d|} \int\limits_{\mathbb{T}^d} \mu (y )   \frac{\partial }{\partial y_l}(\chi^{k}_\alpha +y_\alpha e_k)_l \ dy. \\[2mm]
\end{equation*}
Thus for fixed $k,\alpha=1,\ldots,d$ summing over $l$, since $\dive \chi^k_\alpha = 0$ in $Y$, 
we obtain 
\begin{equation}\label{ka}
(A^{*})^{kl}_{\alpha l}  = M_{\mathbb{T}^d}(\mu)\delta_{k\alpha} .
\end{equation}
Using \eqref{qd} and \eqref{ka}, as $\dive u =0$, we deduce
$$
q = -\frac{1}{d} (A^{*})^{k l}_{\alpha l}\frac{\partial u_k}{\partial x_\alpha} 
= - \frac{1}{d}M_{\mathbb{T}^d}(\mu) \delta_{k\alpha}\frac{\partial u_k}{\partial x_\alpha} = 0 .
$$
Finally, the macro constitutive law  follows as a consequence from \eqref{flux-1} : 
$$
\sigma_{l\beta} = {(A^{*})}^{kl}_{\alpha\beta}\frac{\partial u_k}{\partial x_\alpha}. 
$$

\paragraph{Step 5E.}
Since $q=0$, we deduce from \textbf{Step 5B} that $F^1=0$ and from \eqref{F1F2} 
we get the following  homogenized Stokes system satisfied by $u,p$ : 
\begin{equation}\begin{aligned}\label{hom-eq}
 -{(A^{*})}^{kl}_{\alpha\beta}\frac{\partial^2 u_k}{\partial x_\alpha \partial x_\beta}  +  \frac{\partial p}{\partial x_l} &= f_l  \mbox{ in }\Omega\ \mbox{ for } l =1,..,d. \\[1mm]
\dive u &= 0  \mbox{ in }\Omega\\[1mm]
u &= 0 \mbox{ on }\partial\Omega.
\end{aligned}\end{equation}
This completes the proof of Theorem \ref{thm1.1}. 

\bigskip

\paragraph{Acknowledgement :} This work has been carried out within a project supported by Indo - French Centre for Applied Maths -UMI, IFCAM. G. A. is a member of the DEFI project at INRIA Saclay Ile-de-France.

\bibliographystyle{plain}
\bibliography{Master_bibfile}
\end{document}